\documentclass[11pt]{article}
\usepackage{makeidx}  
\usepackage{amsthm,amsfonts,amssymb,bm,mathrsfs,indentfirst}
\usepackage{amsmath}
\usepackage{xcolor}
\usepackage{amsmath, amssymb, amsthm}
\usepackage{algorithm}
\usepackage{algorithmic} 
\usepackage{geometry}
\usepackage{graphicx}
\usepackage{amssymb}
\usepackage{color}
\usepackage{makeidx} 
\usepackage{amsthm,amsfonts,amssymb,bm,mathrsfs,indentfirst}
\usepackage{amsmath}
\usepackage{authblk}

\newtheorem{condition**}{A*}
\newtheorem{condition***}{C*}
\newtheorem{condition*}{C}

\newtheorem{problem}{Problem}[section]
\newtheorem{proposition}{Proposition}[section]

\newtheorem{definition}{Definition}[section]
\newtheorem{theorem}{Theorem}[section]

\newtheorem{remark}{Remark}[section]

\def\w{\widetilde}

\begin{document}
%
%
\title{Generalized Fine-Tuning of Diffusion Models via Stochastic Control and FBSDEs}
\author[1]{Zirui Wang}
\author[1]{Lu Wang\thanks{Corresponding author. Email: \texttt{lucywang@sdu.edu.cn}}} 
\affil[1]{Zhongtai Securities Institute for Financial Studies, Shandong University, Jinan 250100, China.}

\maketitle

\begin{abstract}
We propose a generalized fine-tuning framework for diffusion models from the perspective of stochastic control. Beyond entropy-regularized formulations, we introduce a general running cost that induces a relative generalized path cost, encompassing both Kullback–Leibler divergence and optimal transport metrics as special cases. This leads to a fully nonlinear Hamilton–Jacobi–Bellman equation. We characterize the value function through a forward–backward stochastic differential equation system and establish existence and uniqueness under standard regularity conditions. The optimal control admits a nonlinear feedback form driven by the backward stochastic differential equation gradient component. Finally, we show that generalized fine-tuning naturally introduces an additional gradient-dependent penalty, providing a unified framework for diffusion fine-tuning under general distributional constraints.
\end{abstract}

\section{Introduction}
Diffusion models \cite{sohl2015deep,song2020score,ho2020denoising} have gained prominence as a potent class of deep probabilistic generative models, demonstrating state-of-the-art performance across a diverse array of applications, including image synthesis, video generation \cite{ho2022imagen}, molecule design \cite{hoogeboom2022equivariant,xu2022geodiff} and medical image reconstruction \cite{chung2022score,xie2022measurement}. In the challenging domain of image synthesis, diffusion models have gradually supplanted the long-standing prevalence of Generative Adversarial Networks (GANs) \cite{goodfellow2014generative} by avoiding the use of unstable adversarial training. Diffusion models excel at capturing complex data distributions , but training of diffusion models is time-consuming.\cite{song2020denoising,kong2021fast}. Consequently, while efforts continue to be directed toward continuous model improvement for step-size reduction \cite{liu2023instaflow,yin2024one}, fine-tuning using pre-trained diffusion models represents another important research direction \cite{black2023training,fan2023dpok,ruiz2023dreambooth}. 

The fine-tuning of diffusion models is formulated as a sequential decision-making problem or a stochastic control problem, primarily aimed at optimizing terminal rewards while incorporating regularization to constrain the resultant distribution's divergence from the pre-trained model \cite{clark2023directly}. To mitigate this issue, Uehara et al. \cite{uehara2024fine} proposed the Entropy-Regularized Fine-Tuning (EFT) framework, which frames the fine-tuning process as an entropy-regularized stochastic control problem. By incorporating a Kullback-Leibler (KL) divergence penalty relative to the pre-trained model, EFT ensures that the fine-tuned distribution maintains high sample fidelity and diversity while optimizing for the target reward. Tang \cite{tang2025finetuningdiffusionmodelsstochastic} formalized the theoretical underpinnings of entropy regularization and proposed an extension to general $f$-divergences. However, due to the lack of a chain rule for general $f$-divergences, the direct stochastic control formulation becomes intractable, necessitating a reliance on indirect reward modification rather than direct optimization1. Consequently, establishing a fully generalized stochastic control framework capable of handling arbitrary divergence constraints remains an open challenge.

To overcome these limitations and address a broader class of generalized stochastic control problems, Backward Stochastic Differential Equations (BSDEs) provide a highly capable mathematical framework. Initially introduced as dual equations in the study of the stochastic maximum principle \cite{bismut1973conjugate} and later established within a general non-linear framework by Pardoux and Peng \cite{pardoux1990adapted}, BSDEs have become a vital tool in stochastic optimal control \cite{peng1992stochastic, peng1993backward}. A defining feature of BSDEs is the presence of an adapted control process that guarantees the system reaches a specified terminal condition, making it uniquely suited for terminal reward optimization. While traditional numerical solutions often suffer from the curse of dimensionality, the advent of Deep BSDE methods \cite{han2018solving} has leveraged the approximation power of deep neural networks to efficiently solve high-dimensional stochastic control problems. By bridging Deep BSDEs with generative modeling, we can construct a unified and tractable framework to directly optimize diffusion fine-tuning under a broad class of divergence constraints.

The structure of this paper is as follows. Section 2 reviews the foundations of diffusion models and the EFT framework. Section 3 formally introduces the generalized path cost $\mathcal{C}_\ell$ and the Generalized Fine-Tuning (GFT) problem. Section 4 discusses the numerical solution of the resulting FBSDE systems using deep learning-based methods. Section 5 develops the robust control framework against model ambiguity.

\section{Diffusion model and entropy-regularized fine-tuning}
\subsection{Diffusion Model}
\numberwithin{equation}{section} 
%
%
Diffusion models seek to create high-quality and diverse samples, such as images, audio, or text, that closely match a target data distribution. They operate by simulating a stochastic process that gradually deconstructs structured data into noise and then reverses this process to reconstruct data from noise. This mechanism admits a natural interpretation as continuous time stochastic processes. 

Let us consider the diffusion model characterized by the following SDE on a finite time horizon $[0,T]$:
\begin{equation}
\label{f:SDE}
dX(s) = b(s,X(s))ds + \sigma(s) dW(s),\; X(0) \sim \pi_{data}, 
\end{equation}
where $b: [0,T]\times \mathbb{R}^d \rightarrow \mathbb{R}^d$ is the drift term, $\sigma: [0,T]\rightarrow \mathbb{R}_{+} $ denotes the diffusion term and $\pi_{data}$ is the underlying data distribution. The drift and diffusion functions are typically chosen to define a forward process that gradually transforms the complex data distribution $ \pi_{data}$ into a simple prior distribution, such as a standard Gaussian distribution. The main purpose of the generative model is to learn a \textbf{corresponding reverse process} of the above SDE in order to approximate the data distribution $ \pi_{data}$. For convenience, we denote by $p_t$ the probability density of $X$ at time $t$.

Based on the Kolmogorov forward and backward equations, a specific form of the reverse-time SDE could be derived from the forward SDE \eqref{f:SDE}, in the sense that the probability densities of the forward and reverse processes are equal almost everywhere \cite{anderson1982reverse}:
\begin{equation}
\label{f:SDE}
d\overset{\leftarrow}{X}(s) = \Big[b(s,\overset{\leftarrow}{X}(s))-\sigma^2(s)\nabla \log p_s(\overset{\leftarrow}{X}(s))\Big]ds + \sigma(s) d\overset{\leftarrow}{W}(s),\; \overset{\leftarrow}{X}(0) \sim p_T, 
\end{equation}
where $p$ and $\overset{\leftarrow}{W}$ is a $d$-dimensional Wiener process adapted to the reverse-time filtration. To simplify the analysis, we perform a time transformation such that the stochastic term becomes a standard Brownian motion adapted to the forward-time filtration:
\begin{equation*}
\label{reverse:SDE}
d\w{X}(s) = \Big[-b(T-s,\w{X}(s))+\sigma^2(T-s)\nabla \log p_{T-s}(\w{X}(s))\Big]ds + \sigma(T-s) dW(s),\w{X}(0) \sim p_T.
\end{equation*}
Since diffusion model generates the target distribution from noise, which necessitates that the noise distribution remains independent of the target distribution. Consequently, rather than initializing the backward process with $\w{X}_{0} \sim p_T$, we instead commence the process using a noise distribution $\pi$. The distribution of the time reversal process $\w{X}_{s} = X_{T-s}$ is governed by the SDE:
\begin{equation}
d\w{X}(s) = \Big[-b(T-s,\w{X}(s))+\sigma^2(T-s) \nabla \log p_{T-s}(\w{X}(s))\Big]ds + \sigma(T-s) dW(s),\w{X}(0) \sim \pi.
\end{equation}
Now with the (true) score function $\nabla\log p(t,x)$ being replaced with the score matching function $\mathcal{M}_t$, the backward process is set to:
\begin{equation}
\label{pretrain:sde}
d{Y}(s) = \w{b}(s,Y_{s})ds + \tilde \sigma(s) dW(s),{Y}(0) \sim \pi.
\end{equation}
where
$$\w{b}(s,\cdot):=-b(T-s,\cdot)+\sigma^2(T-s) \mathcal{M}_{T-s}(\cdot),\;\w{\sigma}(s):=\sigma(T-s).$$ 
It has been observed that data generation becomes feasible as long as the score function,
 i.e., the gradient of the log-density 
$\log p_{T-s}(\w{X}(s))$ is available. This insight has led to the development of a class of powerful generative models, which focus on learning an accurate approximation of the score function using neural networks and then realizing high-quality data generation.

\subsection{Entropy-regularized fine-tuning}
In order to prevent the issue of reward collapse (or catastrophic forgetting) that arises due to overfitting when directly optimizing rewards, an optimization framework utilizes entropy regularization to balance the reward maximization and generality of the pretrained model. Raising the issue to the process via stochastic control:

\begin{equation}
\label{state}
d X_t = [\w{b}(t,{X}_t)+u_t ]\, dt +  \w{\sigma}(t), dB_t
\quad X_0 \sim  p_{\text{noise}}(\cdot)
\end{equation}
For convenience purposes, we denote $\w{\sigma}(t) := \sigma(T-t)$ . $\mu$ is a decision variable here. Moreover, from now on, the forward fine-tuning process is symbolized by $X_t$ to differentiate it from $Y_t$ in BSDE. 

\begin{problem}\textbf{EFT} 
Find an optimal control $u^*$ such that
\begin{equation}
u^{*}=\operatorname{argmax}_{u \in \mathbb{R}^d} \{\underbrace{\mathbb{E}_{Q^{u}_{[0,T]}(\cdot)}[r(X_T)]}_{Reward}
-\underbrace{\alpha D_{KL}(Q^{u}_{[0,T]}(\cdot),Q_{[0,T]}(\cdot))}_{Fine-tuning}\}
\end{equation}

\end{problem}

By Girsanov’s theorem:
\begin{align*}
\mathrm{D_{KL}}(Q^{u}_{[0,T]} \| Q_{[0,T]}) 
= \mathbb{E}_{Q^{u}_{[0,T]}} \left[ 
\log \frac{dQ^{u}_{[0,T]}}{dQ_{[0,T]}} 
\right]= \mathbb{E}_{Q^{u}_{[0,T]}} \left[ 
\frac{1}{2} \int_0^T \left( \frac{u_t}{\bar{\sigma}(t)} \right)^2 dt 
\right],
\end{align*}

For the u part, the cost function can be described as following:
\begin{equation}
\label{cost_u2}
    J(u) ={\mathbb{E}_{Q^{u}_{[0,T]}(\cdot)}}\left[r(X_T)-\frac{\alpha}{2} \int_0^T |\frac{u_t}{\bar{\sigma}(t)}|^2dt   \right],
\end{equation}
Define the value function
\begin{equation}
v(t,x)=\max_{u\in\mathbb{R}^{d}}\,{\mathbb{E}_{Q^{u}_{[0,T]}(\cdot)}}\Big[r(X_T)-\frac{\alpha}{2}\int_t^T|\frac{u_s}{ \bar{\sigma} (s)}|^2ds \,\Big|\, X(t)=x\Big].
\end{equation}

Then, by the dynamic programming principle, \(v(t,y)\) satisfies the Hamilton-Jacobi-Bellman (HJB) equation
\begin{equation}
\begin{aligned}
\frac{\partial v}{\partial t}+\frac{\w{\sigma}^2(t)}{2}\Delta v+\max_{u}\left\{\Big(\w{b}(t,x)+u\Big)\cdot\nabla v-\frac{\alpha|u|^2}{2\w{\sigma} (t)^2}\right\}=0,
\end{aligned}
\end{equation}
with the terminal condition
\[
v(T,x)=r(x).
\]

Then $u^*(t,y)=\frac{\w{\sigma}^2(t)}{\alpha}\nabla v(t,y)$ (if admissible). The HJB equation becomes
\begin{equation}
\begin{aligned}
\frac{\partial v}{\partial t}+\frac{\w{\sigma}^2(t)}{2}\Delta v+\w{b}(t,x)\cdot\nabla v+\frac{\w\sigma^2(t)}{2\alpha}|\nabla v|^2=0,\quad v(T,y)=r(x).
\end{aligned}
\end{equation}

The initial author discussed the reward collapse problem in diffusion models and recommended an entropy-regularized fine-tuning mechanism. In continuation with this basis, we introduce an auxiliary process $Z_t$ and discuss our constrained fine-tuning solution.

\section{Generalized fine-tuning with stochastic control}

The initial entropy-regularized formulation introduced a quadratic control cost to mitigate reward collapse, leading to a well-structured HJB equation with closed-form optimal control. However, in many practical applications, the quadratic penalty may be too restrictive to capture the diverse trade-offs between exploration, stability, and performance. To overcome this limitation, we propose a generalized stochastic control formulation by replacing the quadratic penalty $|u|^2$ in \eqref{cost_u2} with a more flexible running cost function $\ell(t, X_t, u_t)$. This more general formalism is supported by the relative generalized path cost $\mathcal C_\ell(P\|Q$), which offers a theoretical basis from which to reason about the "effort" involved in moving the pre-trained distribution to a new target one. 

\subsection{The relative Generalized Path cost}

\begin{definition}
\label{def:gencost}
Raising the issue to the process via stochastic control:
\begin{equation}
\label{state}
d X_t = [\w{b}(t,{X}_t)+u_t ]\, dt +  \w{\sigma}(t), dB_t
\quad X_0 \sim  \mu_{0}(\cdot)
\end{equation}
Given a running cost $\ell : [0,T] \times \mathbb{R}^d \times \mathbb{R}^d \to \mathbb{R}_{\ge 0}$ satisfying the following assumptions:
\begin{itemize}
\item[(H1)] For each $(t, x)\in [0,T] \times \mathbb{R}^d$, $\ell(t, x , \cdot)$ is strong convexity on $\mathbb{R}^d$ and lower semicontinuity.
\item[(H2)] There exist constants $p \ge 1$ and $c_1, c_2 > 0$ such that for all $(t,x,u) \in [0,T] \times \mathbb{R}^d \times \mathbb{R}^d$, the following hold: $\ell(t,x,u) \geq c_1 \|u\|^p - c_2$.
\item[(H3)] $\ell(t, x, u) = 0$ if and only if $u = 0$.

\end{itemize}
The relative generalized path cost induced by $\ell $ is defined as follows:
\begin{equation}
\mathcal C_\ell(P\|Q):=\inf_{u:\ \mathcal L(X^u)=P}\ \mathbb{E}_{Q^u}\Big[\int_0^T\ell(t,X_t,u_t)\,dt\Big]. 
\end{equation}
\end{definition}

\begin{proposition}
$\mathcal C_\ell(P\|Q)$ satisfies the following properties:
\begin{itemize}
\item[\bf (i)] \textbf{Non-negativity:} $\mathcal C_\ell(P\|Q) \ge 0$
\item[\bf (ii)] \textbf{Identity of Indiscernibles:} $\mathcal C_\ell(P\|Q) = 0 \iff P = Q$
\item[\bf (iii)] \textbf{Weak Lower Semicontinuity:} Assume $P_n \Rightarrow P$,
then $$\liminf_{n \to \infty} C_\ell(P_n\|Q) \ge C_\ell(P\|Q).$$
\end{itemize}
\end{proposition}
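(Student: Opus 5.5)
The three properties will be proved in order. Throughout, $Q$ is the law of the uncontrolled path ($u\equiv 0$) of the state equation, and $\mathcal C_\ell(P\|Q)=+\infty$ when no admissible $u$ has $\mathcal L(X^u)=P$.

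\textbf{Non-negativity.} This is immediate. Since $\ell\ge 0$, for every admissible $u$ the quantity $\int_0^T\ell(t,X_t,u_t)\,dt$ is nonnegative, and so is its expectation. Taking the infimum over an empty or nonempty set of controls keeps the value in $[0,+\infty]$.

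\textbf{Identity of indiscernibles.} I split this into two directions.
\begin{itemize}
\item If $P=Q$, the control $u\equiv 0$ is admissible and produces $\mathcal L(X^0)=Q$. By (H3), $\ell(t,x,0)=0$, so the cost is $0$.
\item Conversely, suppose $\mathcal C_\ell(P\|Q)=0$. Take admissible $u^n$ with $\mathcal L(X^{u^n})=P$ and $\mathbb E\int_0^T\ell(t,X^n_t,u^n_t)\,dt\to 0$. I want to conclude $u^n\to 0$ in $L^1(dt\otimes d\mathbb P)$.
\end{itemize}
Here (H2) alone gives only $c_1\|u\|^p-c_2$, which is not enough near zero. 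Instead I will use (H1)+(H3). Strong convexity together with $\ell(t,x,0)=0=\min\ell$ gives $\ell(t,x,u)\ge \tfrac{\lambda}{2}\|u\|^2$, where $\lambda$ is the convexity modulus, assumed uniform in $(t,x)$. Hence $\mathbb E\int_0^T\|u^n_t\|^2dt\to0$. Since $X^{u^n}_t-X^0_t$ is driven by the same noise, it equals $\int_0^t[\w b(s,X^n_s)-\w b(s,X^0_s)+u^n_s]ds$. Gronwall's inequality with a Lipschitz $\w b$ then shows $X^{u^n}\to X^0$ in $L^2$ uniformly in time. Therefore $P=\mathcal L(X^{u^n})\to Q$, so $P=Q$. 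If one prefers to avoid a strong-solution coupling, the same conclusion follows from Girsanov's theorem: the relative entropy $H(P\|Q)\le \tfrac12\mathbb E\int \|u^n/\w\sigma\|^2dt\to0$, assuming $\w\sigma$ is bounded below.

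\textbf{Weak lower semicontinuity.} This is the main obstacle. We may pass to a subsequence with $\mathcal C_\ell(P_n\|Q)\to L:=\liminf_n \mathcal C_\ell(P_n\|Q)<\infty$, and pick near-optimal controls $u^n$ with cost at most $\mathcal C_\ell(P_n\|Q)+1/n$. The proof then has four steps.
\begin{itemize}
\item \emph{Compactness.} By (H2) with $p>1$, the controls $u^n$ are bounded in $L^p(dt\otimes d\mathbb P)$. (If $p=1$, I would instead use the quadratic lower bound from strong convexity.) I will relax to the measure-valued pair formed by the path law and the occupation measure $\nu^n(dt,dx,du)=\mathbb E[\delta_{(X^n_t,u^n_t)}]dt$. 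The moment bound gives tightness of $\nu^n$, and the laws $P_n$ converge by assumption.
\item \emph{Limit drift.} Pass to a weakly convergent subsequence of the joint laws of $(X^n,\int_0^\cdot u^n_sds)$. The limit $X$ solves the state equation with drift $\w b+\bar u$. Here $\bar u_t=\mathbb E[u_t\mid \mathcal F^X_t]$, the Markovian projection of the relaxed control, which I identify by the martingale-problem characterization. The limit law is $P$.
\item \emph{Lower semicontinuity of the cost.} Since $\ell\ge0$ is lower semicontinuous in $u$, the map $\nu\mapsto\int\ell\,d\nu$ is weakly l.s.c. This gives $\int\ell\,d\nu\le L$.
\item \emph{Jensen.} Because $\ell$ is convex in $u$, replacing the relaxed control by its conditional mean $\bar u$ does not increase the cost. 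So $\mathbb E\int\ell(t,X_t,\bar u_t)dt\le L$, and $\bar u$ is admissible with law $P$. Hence $\mathcal C_\ell(P\|Q)\le L$.
\end{itemize}
The delicate points are the joint lower semicontinuity, which needs $\ell$ jointly l.s.c. in $(x,u)$ (I will assume this, strengthening (H1)), and the identification of the limit drift under only weak convergence. For these I will follow the standard relaxed-control arguments of the Mikami/Léonard type used for optimal-transport costs on path space.
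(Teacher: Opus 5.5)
Your arguments for (i) and (ii) are essentially the paper's, except that in (ii) you supply a step the paper only gestures at. The paper passes from $\mathbb{E}\int_0^T\ell\,dt\to 0$ to $u^n\to 0$ by citing coercivity (H2), which, as you note, says nothing near $u=0$. Your bound $\ell(t,x,u)\ge\frac{\lambda}{2}\|u\|^2$ (strong convexity plus $\ell(t,x,0)=0=\min\ell$), followed by Gronwall or the entropy estimate, makes this rigorous. It does need a convexity modulus uniform in $(t,x)$, which the paper tacitly uses later as $\lambda_\ell$.

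For (iii) your route is genuinely different. The paper takes a weak $L^p$ limit $u^n\rightharpoonup u^*$ on a fixed space by Banach--Alaoglu, asserts $X^{u^n}\Rightarrow X^{u^*}$ because the drift is linear in $u$, and closes with an Ioffe-type theorem. That is shorter, but its Step 3 fails in general. Take $\w{b}=0$, $\w{\sigma}=1$, $X_0=0$ and the adapted controls $u^n_s=\mathrm{sign}\big(\sin(2\pi n B_{1/2})\big)\mathbf{1}_{\{s>1/2\}}$. These converge weakly to $0$, yet $X^{u^n}$ converges in law to $B_s+\epsilon\,(s-1/2)^+$ with $\epsilon$ a random sign independent of $B$, which is not $\mathcal{L}(X^0)$. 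There are further problems with the paper's route:
\begin{itemize}
\item weak limits of controls produce mixtures, i.e.\ relaxed controls, not a single control with the right law;
\item Ioffe's theorem needs strong convergence of the state, not convergence in law;
\item for $p=1$ Banach--Alaoglu is unavailable in $L^1$, a case you handle through the quadratic lower bound.
\end{itemize}
Your relaxed-control limit, identification of the limit drift via the martingale problem, and conditional Jensen step are exactly what is needed to produce a control with law $P$ and cost at most $L$. The price is a weak formulation of admissibility (your $\bar u$ lives on the limit space and is only $\mathcal{F}^X$-adapted) and the standard Mikami/L\'eonard machinery.

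When you write it out, tighten three points.
\begin{itemize}
\item The projection must be onto the whole past of $X$, namely $\bar u_t=\mathbb{E}\big[\int u\,\mu_t(du)\,\big|\,\mathcal{F}^X_t\big]$. Calling it the Markovian projection is misleading: $\mathbb{E}[u_t\mid X_t]$ only reproduces one-dimensional marginals (Gy\"ongy), not the path law required by $\mathcal{L}(X^u)=P$.
\item The averaged occupation measure $\mathbb{E}[\delta_{(X^n_t,u^n_t)}]\,dt$ discards the pathwise coupling between $X$ and $u$ that this conditioning needs. Instead, pass to the limit in the joint laws of $\big(X^n,\delta_{u^n_t}(du)\,dt\big)$ in the stable topology. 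This gives $(X,\mu)$ with drift $\w{b}+\int u\,\mu_t(du)$ before projecting.
\item The joint lower semicontinuity in $(x,u)$ you propose to add is automatic under (H5). Indeed $\ell(t,x,\cdot)$ is finite and convex, hence continuous, and (H5) makes $\ell$ Lipschitz in $x$ uniformly in $u$, so $\ell(t,\cdot,\cdot)$ is jointly continuous.
\end{itemize}
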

\begin{proof}
\textbf{(i) Non-negativity:}
The assertion $\mathcal C_\ell(P\|Q) \ge 0$ follows directly from the definition of the path cost and the non-negativity of the running cost $\ell$:
$$
\ell : [0,T] \times \mathbb{R}^d \times \mathbb{R}^d \to \mathbb{R}_{\ge 0} \implies \mathbb{E}_{Q^u}\left[\int_0^T\ell(t,X_t,u_t)\,dt\right] \ge 0.
$$
Taking the infimum preserves the non-negativity.

\textbf{(ii) Identity of Indiscernibles:}
\textbf{($\Leftarrow$)} If $P = Q$, then $P$ is the law of the uncontrolled dynamics ($u \equiv 0$). According to assumption ({H3}), the cost is zero when the control is zero, i.e., $\ell(t, X_t, 0) \equiv 0$. Then, we have:
$$
\mathcal C_\ell(Q\|Q) \le \mathbb{E}_{Q^0}\left[\int_0^T\ell(t,X_t^0,0)\,dt\right] = 0.
$$
Since $\mathcal C_\ell(Q\|Q) \ge 0$ by (i), we conclude $\mathcal C_\ell(Q\|Q) = 0$.

\textbf{($\Rightarrow$)} Suppose $\mathcal C_\ell(P\|Q) = 0$. Let $\{u^n\}_{n \ge 1}$ be an infimizing sequence such that $\mathcal L(X^{u^n}) = P$ and
$$
\mathbb{E}_{Q^{u^n}}\left[\int_0^T\ell(t,X_t,u^n_{t})\,dt\right] \to 0.
$$
Since $\ell \ge 0$, it follows that $\ell(\cdot, X^{u^n}_\cdot, u^n_\cdot) \to 0$ in $L^1([0, T]\times\Omega)$, and consequently in probability. Under assumption ({H3}) and the coercivity (or strict convexity) of $\ell$ with respect to $u$, this implies $u^n \to 0$ in probability. Given the regularity of the SDE coefficients, $X^{u^n}$ converges weakly to the uncontrolled solution $X^0$. Consequently,$$P = \lim_{n \to \infty} \mathcal L(X^{u^n}) = \mathcal L(X^0) = Q.$$

\textbf{(iii) Weak Lower Semicontinuity:}
Let the sequence of probability measures $P_n$ converge weakly to $P$ ($P_n \Rightarrow P$). Without loss of generality, assume that $\liminf_{n \to \infty} \mathcal C_\ell(P_n\|Q) = L < \infty$ (if the limit is infinite, the conclusion holds trivially).We proceed with the proof in steps:

\paragraph{Step 1. (Construction of a bounded sequence).}
For each $n$, choose an optimal (or $\epsilon$-approximate optimal) control $u^n$ corresponding to $P_n$, such that $\mathcal L(X^{u^n}) = P_n$ and its cost functional converges to $L$.By the coercivity assumption (\textbf{H2}) on $\ell$, the boundedness of the cost implies that the control sequence $\{u^n\}$ is uniformly bounded in the Hilbert space $L^p([0, T] \times \Omega; \mathbb{R}^d)$ ($p>1$).

\paragraph{Step 2. (Existence of a weak limit).}
By the Banach-Alaoglu theorem, there exists a subsequence (still denoted as $n$) and a limit control $u^* \in L^p$, such that $u^n$ converges weakly to $u^*$ ($u^n \rightharpoonup u^*$).

\paragraph{Step 3. (Convergence of the state process).}
Since the drift term is linear with respect to the control variable $u$, and the coefficients $\w{b}, \w{\sigma}$ satisfy Lipschitz continuity, the weak convergence of the controls $u^n \rightharpoonup u^*$ combined with the convergence of the initial conditions implies the convergence in law of the state processes $X^{u^n}$ on the path space. Namely:$$X^{u^n} \Rightarrow X^{u^*}.$$Since it is given that $X^{u^n} \sim P_n$ and $P_n \Rightarrow P$, the uniqueness of the weak limit yields $\mathcal L(X^{u^*}) = P$. This implies that $u^*$ is a feasible control for the target distribution $P$.

\paragraph{Step 4. (Conclusion of lower semicontinuity).}
Consider the functional $(X, u) \mapsto \mathbb{E}[\int_0^T \ell(t, X_t, u_t)\,dt]$. Based on the convexity of $\ell$ with respect to $u$ ({H1}) and its lower semicontinuity with respect to $(x, u)$, according to the Ioffe-Tikhonov theorem or related weak lower semicontinuity results, we have:$$\begin{aligned}
\mathbb{E}\left[\int_0^T \ell(t, X_t^{u^*}, u^*_t)\, dt\right] &\le \liminf_{n \to \infty} \mathbb{E}\left[\int_0^T \ell(t, X_t^{u^n}, u^n_t)\, dt\right] \\
&= \liminf_{n \to \infty} \mathcal C_\ell(P_n\|Q).
\end{aligned}$$Finally, by the definition of $\mathcal C_\ell(P\|Q)$, it necessarily holds that $\mathcal C_\ell(P\|Q) \le \mathbb{E}[\int \ell(\cdot, u^*)]$.Combining the inequalities above, the proof is complete:$$\mathcal C_\ell(P\|Q) \le \liminf_{n \to \infty} \mathcal C_\ell(P_n\|Q).$$
\end{proof}
\begin{remark}
The weak lower semicontinuity property ensures that, given a target $P$ (if it is in the reachable set), there exists an optimal control $u^*$ such that the equality holds.    
\end{remark}

\begin{proposition}
Give some special cases of $\ell$ corresponding to $\mathcal C_\ell(P\|Q)$ .
\begin{itemize}
\item[\bf 1.] \textbf{Kullback-Leibler (KL) Divergence} 
If $\ell(t, x, u) = \frac{1}{2} \| \mathbf{\w\sigma}(t)^{-1} u \|^2$, then $\mathcal C_\ell(P\|Q) = \mathcal D_{KL}(P\|Q)$
\item[\bf 2.] \textbf{Wasserstein Distance} 
For a deterministic flow ($\w\sigma(t) \equiv 0$) and $\ell(t,x,u)=\|u\|^p$ where $p \ge 1$, the cost of transporting a probability measure $\mu_0$ to $\mu_1$ is equivalent to the $p$-Wasserstein distance, $W_p(\mu_0, \mu_1)$:
$$
\mathcal{C}_\ell(P \| Q) = W_p^p(\mu_0, \mu_1) = \inf_{(\rho,v)} \left\{ \int_0^T \int_{\mathbb{R}^d} \|v_t(x)\|^p \rho_t(x) \, \mathrm{d}x \mathrm{d}t \right\}
$$
where $\rho(t, x)$ is a time-dependent probability density and $v(t, x)$ is a velocity field, that satisfy the continuity equation:
$$
\frac{\partial \rho}{\partial t} + \nabla \cdot (\rho v) = 0
$$
with boundary conditions $\rho_0 = \mu_0$ and $\rho_T = \mu_T$. The control $u_t$ in our framework is linked to the velocity field via $u_t = v(t, X_t)$.
\end{itemize}
\end{proposition}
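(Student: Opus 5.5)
We treat the two special cases separately; each reduces the definition of $\mathcal C_\ell(P\|Q)$ to a known variational characterization.

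\textbf{Case 1 (KL).} Fix $P$ with $\mathcal D_{KL}(P\|Q)<\infty$; otherwise both sides are $+\infty$. For this we need that any admissible $u$ with finite cost yields $P\ll Q$. We work on path space with $Q$ the law of the pre-trained dynamics \eqref{pretrain:sde} started from $\mu_0$.

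\emph{Upper bound.} For any admissible $u$ with $\mathbb{E}_{Q^u}\int_0^T\|\w\sigma^{-1}u_t\|^2dt<\infty$, Girsanov's theorem, exactly as in Section~2, gives
\begin{equation*}
\mathcal D_{KL}(Q^u\|Q)=\mathbb{E}_{Q^u}\Big[\tfrac12\int_0^T\|\w\sigma(t)^{-1}u_t\|^2dt\Big].
\end{equation*}
The stochastic integral term has zero mean under $Q^u$; to justify this we localize with stopping times and pass to the limit using the finite energy. Hence every feasible $u$ with $\mathcal L(X^u)=P$ has cost exactly $\mathcal D_{KL}(P\|Q)$, so $\mathcal C_\ell\le$ (indeed $=$) $\mathcal D_{KL}$ whenever a feasible control exists.

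\emph{Existence of a feasible control.} Conversely, given $P\ll Q$ with finite entropy, we construct a feasible control. Consider the density martingale $M_t=\mathbb{E}_Q[dP/dQ\,|\,\mathcal F_t]$. By the martingale representation theorem for the Brownian filtration, $M_t=M_0+\int_0^t M_s\,\w\sigma(s)^{-1}u_s\cdot dB_s$ for some progressive $u$. This is the Föllmer-drift construction, and it works once the initial laws agree, $P_0=\mu_0$. The reverse Girsanov theorem then shows that under $P$ the canonical process solves \eqref{state} with this $u$. The classical Föllmer identity, $\mathcal D_{KL}(P\|Q)=\tfrac12\mathbb{E}_P\int\|\w\sigma^{-1}u\|^2dt$, which follows from It\^o's formula applied to $\log M_t$, closes the equality.

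\textbf{Expected main obstacle.} The hardest step is making the representation step rigorous. The issues are possibly non-strictly-positive $M$, which we handle by a stopping argument at $\inf\{t:M_t<1/n\}$, and the mismatch of initial laws, which we handle by restricting to $P$ with $P_0=\mu_0$ or by adding $\mathcal D_{KL}(P_0\|\mu_0)$. We would state this restriction explicitly.

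\textbf{Case 2 (Wasserstein).} With $\w\sigma\equiv0$ the dynamics are the ODE $\dot X_t=\w b(t,X_t)+u_t$, and we interpret $Q$ as the reference flow with $\w b\equiv0$, as the formula implicitly assumes. A Markov feedback control $u_t=v(t,X_t)$ produces marginals $\rho_t$ solving the continuity equation, and $\mathbb{E}\int\|u_t\|^pdt=\int\!\!\int\|v_t\|^p\rho_t\,dx\,dt$. For general non-Markov (random) controls we project onto Markov ones by setting $v(t,x)=\mathbb{E}[u_t\,|\,X_t=x]$. This preserves the marginals by the superposition principle, and by Jensen's inequality and convexity of $\|\cdot\|^p$ it does not increase the cost, so the infimum over controls equals the Benamou--Brenier infimum. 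The Benamou--Brenier theorem, with time rescaled to $[0,T]$, then gives $\inf=T^{1-p}W_p^p(\mu_0,\mu_T)$, which equals $W_p^p$ for $T=1$ (or after normalization). We would note this constant and the case $p=1$, where the infimum is attained only in a relaxed sense.
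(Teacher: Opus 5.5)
The paper gives no proof of this proposition. Its only supporting argument is the Girsanov computation of Section~2, which is your ``upper bound'' paragraph, together with a bare statement of the Benamou--Brenier formula. Your plan follows that route and supplies what the paper omits: F\"ollmer's drift for attainability, the restriction $P_0=\mu_0$ (without which the claim is false), and the factor $T^{1-p}$.

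One step in Case~1 fails as written. The claim that every feasible $u$ with $\mathcal L(X^u)=P$ has cost exactly $\mathcal D_{KL}(P\|Q)$ holds only when $u_t$ is adapted to the natural filtration $\mathcal F^X_t$ of the state. Girsanov computes the relative entropy of the two measures on the underlying probability space, and $P$ is only the image of the first under $\omega\mapsto X(\omega)$. So in general you only get $\mathcal D_{KL}(P\|Q)\le\tfrac12\mathbb E\int_0^T\|\w\sigma^{-1}u_t\|^2dt$. By the innovations theorem the defect is $\tfrac12\mathbb E\int_0^T\|\w\sigma^{-1}(u_t-\hat u_t)\|^2dt$ with $\hat u_t=\mathbb E[u_t\,|\,\mathcal F^X_t]$, which is strictly positive whenever $u$ is not $\mathcal F^X$-adapted. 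For example, take $u_t=\w\sigma(t)\xi$ with $\xi$ an independent $\mathcal F_0$-measurable random sign. Then $P$ is a mixture of two drifted laws and $\mathcal D_{KL}(P\|Q)<T/2$, which is the cost. So as soon as the admissible class contains such controls (for instance the randomized controls you explicitly allow in Case~2), your identity is false.

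The fix is to prove the two inequalities separately:
\begin{itemize}
\item $\mathcal C_\ell\ge\mathcal D_{KL}$ for every feasible $u$, by data processing. Equivalently, pass to $\hat u$, which generates the same law $P$ at no greater cost.
\item $\mathcal C_\ell\le\mathcal D_{KL}$ from F\"ollmer's drift, which is $\mathcal F^X$-adapted and therefore attains equality.
\end{itemize}
You should also commit explicitly to the weak formulation on canonical space. With a prescribed Brownian motion $B$, a path-dependent, non-Lipschitz drift such as F\"ollmer's need not admit a strong solution, so its feasibility would not follow.

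In Case~2, say explicitly that you have changed the constraint. Under the literal definition, $\mathcal L(X^u)=P$ fixes the whole path law. With $\w\sigma\equiv0$ and $\w b\equiv0$ this forces $u_t=\dot X_t$, so $\mathcal C_\ell(P\|Q)=\mathbb E_P\int_0^T\|\dot X_t\|^p\,dt$ with nothing left to minimize. This quantity is at least $T^{1-p}W_p^p(P_0,P_T)$, with equality essentially only for displacement interpolations along an optimal coupling. What your projection/superposition argument actually computes is $\inf\{\mathcal C_\ell(P\|Q): P_T=\mu_T\}$, i.e.\ the problem with the terminal-marginal constraint $\mathcal L(X^u_T)=\mu_T$, which is evidently what the proposition intends. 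Under that reading your argument is correct, including the $T^{1-p}$ normalization and the caveat for $p=1$.
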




\subsection{The Generalized Fine-Tuning Problem }
We add the following assumption:

\begin{itemize}
\item[(H4)](Smoothness of the reward). Assume reward function $r(x) \in C^{2+\alpha}$ with bounded second-order derivatives.

\item[(H5)](Smoothness of the cost). Assume the running cost $\ell(t,x,u)$ is $L^\ell_
0$-Lipschitz and $L^\ell_
1$-gradient Lipschitz in $x \in \mathbb{R}^d$ and $\nabla_u\ell(t,x,u)$ is $L^\ell_
{0,u}$-Lipschitz, i.e.,
\begin{equation}
\begin{aligned}
|\ell(t,x_1,u) - \ell(t,x_2,u)| \le L^\ell_
0 \|x_1 - x_2\|_2 ,\\
\|\nabla_x \ell(t,x_1,u) - \nabla_x \ell(t,x_2,u)\|_2 \le L^\ell_1 \|x_1 - x_2\|_2 ,\\
\| \nabla_u \ell(t, x_1, u) - \nabla_u \ell(t, x_2, u) \|_2 \le L_{0,u}^\ell \|x_1 - x_2\|_2
\end{aligned}
\end{equation}

\item[(H6)] (Mixed Regularity of the cost) The mixed partial derivative of the running cost, $\nabla_x \nabla_u \ell(t, x, u)$, is  $L_{1,x}^{\nabla_u \ell}$-Lipschitz continuous with respect to the state $x$ uniformly in $u$.
\begin{equation}
\| \nabla_x \nabla_u \ell(t, x_1, u) - \nabla_x \nabla_u \ell(t, x_2, u) \|_2 \le L_{1,x}^{\nabla_u \ell} \|x_1 - x_2\|_2.
\end{equation}

\item[(H7)] (Regularity and Ellipticity of SDE Coefficients). 
The drift and diffusion coefficients $(\w b,\w\sigma)$ satisfy the following conditions:
\begin{enumerate}
\item Smoothness.  
The drift $\w b:[0,T]\times\mathbb{R}^d\to\mathbb{R}^d$ is continuously differentiable in $x$,  
and its gradient $\nabla_x \w b(t,x)$ is globally Lipschitz and bounded:
\[
\|\nabla_x \w b(t,x_1)-\nabla_x \w b(t,x_2)\|
\le L_b \|x_1-x_2\|,\qquad
\|\nabla_x \w b\|_\infty<\infty.
\]
\item Uniform ellipticity.  
The diffusion matrix $\w\sigma:[0,T]\to\mathbb{R}^{d\times d}$ is continuous, bounded, and uniformly elliptic;  
that is, there exists a constant $c_0>0$ such that for all $t\in[0,T]$ and all $\xi\in\mathbb{R}^d$,
\[
\w\sigma^{2}(t)\xi^\top\xi \ge c_0 \|\xi\|^2.
\]
\end{enumerate}

\end{itemize}

Under assumptions (H1)-(H7), let us introduce the following generalized cost functional
\begin{equation}
    \widetilde{J}(u) := \mathbb{E}\left[ \int_0^T \ell(t, X_t, u_t)\, dt + r(X_T) \right],
\end{equation}
where $\ell : [0,T] \times \mathbb{R}^d \times \mathbb{R}^d \to \mathbb{R}_{\ge 0}$ represents a running cost function, and $r : \mathbb{R}^d \to \mathbb{R}$ is the terminal reward. This framework provides considerable flexibility, as different specifications of $\ell$ give rise to distinct fine-tuning behaviors.

Some representative choices are outlined below.
\begin{itemize}
\item In the entropy-regularized case, the generalized cost functional could be select as 
$$\ell(t,x,u) = \frac{\alpha}{2} \left|\frac{u}{\sigma(t)}\right|^2,$$ 
This quadratic penalty recovers the classical entropy-regularized formulation, ensuring that the fine-tuning process remains close to the pretrained distribution. Such a construction mitigates the risk of reward collapse by penalizing excessive deviations, thereby preserving stability.
\item A sparsity-inducing penalty can be introduced through
$$\ell(t,x,u) :=\lambda|u|_1.$$
This choice promotes sparse control signals, encouraging only a limited subset of directions to be active during fine-tuning. The resulting selective adjustment is particularly desirable in settings where computational efficiency, model interpretability, or parameter parsimony is of central importance, as it prevents unnecessary perturbations of the entire parameter space.

\item  A time-dependent quadratic penalty of the form
$$\ell(t,x,u) := \beta(t) |u|^2,$$ 
allows for adaptive control costs that evolve throughout the fine-tuning horizon. By choosing $\beta(t)$ to be relatively small in the early stages, the framework permits greater exploratory behavior, whereas larger values of $\beta(t)$ in later stages enforce stability and convergence. This temporal modulation of the penalty naturally balances the trade-off between exploration and exploitation across different phases of training.
\end{itemize}

Based on the foregoing discussion, we extend the Problem (EFT) to a more general framework, referred to as the Generalized Fine-Tuning (GFT) problem. \\

\begin{problem}
\textbf{GFT} 
\label{prob:GFT}

Find an optimal $u^*$ such that 
$\widetilde{J}(u^*) = \inf_{u}\widetilde{J}(u)$ subject to the state dynamics given in \eqref{state}.    
\end{problem}

\begin{proposition}
\label{pro-hjb-ell}
Assume $X_t$ and $u_t$ are defined in Definition \ref{def:gencost} and assumptions  (H1)-(H7) hold. The dynamic programming principle implies that the value function:
\begin{equation}
\label{eq:v^*}
v^*(t,x) := \inf_{u} \mathbb{E}\left[ \int_t^T \ell(s, X_s, u_s)ds + r(X_T) \right].
\end{equation}
Suppose that there exists $v(t,x) \in \mathcal{C}^{1,2}([0,T]\times\mathbb{R}^d)$ of at most polynomial growth in x satisfies the nonlinear Hamilton-Jacobi-Bellman (HJB) equation:
\begin{equation}
\label{eq:hjb-ell}
    \frac{\partial v}{\partial t} + \frac{\w{\sigma}^2(t)}{2} \Delta v +\w{b} (t,x) \cdot \nabla v + \inf_{u } \left\{ \ell(t,x,u) + u \cdot \nabla v \right\} = 0, \quad v(T,x) = r(x).
\end{equation}
The optimal control $u^*$ is given as:
\begin{equation}
\label{eq:u^*_v}
u^*(t) = \mathrm{argmin}_{u\in\mathbb{R}^d} \left\{ \ell(t,X_t,u) + u \cdot \nabla v(t, X_t) \right\} .   
\end{equation}

\end{proposition}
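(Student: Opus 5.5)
This is a classical verification theorem: we show that the classical solution $v$ of \eqref{eq:hjb-ell} coincides with the value function $v^*$ in \eqref{eq:v^*}, and that the feedback \eqref{eq:u^*_v} attains the infimum. The argument has four steps: a well-posedness check, a lower bound for arbitrary controls, equality for the candidate feedback, and a final identification.

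\textbf{Step 1 (the minimizer is well defined).} Fix $(t,x)$ and $p\in\mathbb{R}^d$. By (H1), $u\mapsto \ell(t,x,u)+u\cdot p$ is strongly convex and lower semicontinuous, so it is coercive. Hence it has a unique minimizer $\hat u(t,x,p)$, and the Hamiltonian
\[
H(t,x,p):=\inf_u\{\ell(t,x,u)+u\cdot p\}
\]
is finite. From the first-order condition $\nabla_u\ell(t,x,\hat u)=-p$ and strong convexity, $\hat u$ is Lipschitz in $p$. Combined with (H5), $\hat u$ is also Lipschitz in $x$. Consequently, the feedback $\hat u(t,x,\nabla v(t,x))$ is locally Lipschitz in $x$, since $v\in\mathcal C^{1,2}$.

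\textbf{Step 2 (lower bound for any admissible control).} Take an admissible $u$ (progressively measurable, with moments sufficient for the state equation \eqref{state} to have a solution with finite polynomial moments). Apply It\^o's formula to $v(s,X_s)$ on $[t,\tau_n]$, where $\tau_n$ is the exit time from a ball of radius $n$, so that the stochastic integral $\int \nabla v\,\w\sigma\,dB$ is a true martingale. Using \eqref{eq:hjb-ell} and the pointwise inequality $\ell(s,X_s,u_s)+u_s\cdot\nabla v\ge H(s,X_s,\nabla v)$, we get
\[
v(t,x)\le \mathbb{E}\Big[\int_t^{\tau_n}\ell(s,X_s,u_s)\,ds+v(\tau_n,X_{\tau_n})\Big].
\]
Now let $n\to\infty$. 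Since $\ell\ge0$, monotone convergence handles the integral. For the boundary term, the polynomial growth of $v$ together with the moment bounds $\mathbb{E}\sup_s|X_s|^q<\infty$ gives uniform integrability; these moment bounds follow from the Lipschitz drift (H7), the bounded $\w\sigma$, and the integrability of $u$. Dominated convergence then yields $v(t,x)\le \widetilde J_t(u)$, and hence $v\le v^*$.

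\textbf{Step 3 (equality for the feedback).} Solve the closed-loop SDE with drift $\w b+\hat u(s,X_s,\nabla v(s,X_s))$. I expect this to be the main obstacle: a strong solution without explosion, and admissibility of the feedback, require growth control of $\hat u$. The plan is to use the linear growth obtained from the Lipschitz bounds in Step 1, assuming or deriving that $\nabla v$ has at most linear growth; it would be natural to add this as a standing hypothesis on $v$. Failing that, uniform ellipticity (H7) permits a weak solution via Girsanov/Krylov-type arguments, localized by the stopping times $\tau_n$. Along this solution the inequality from Step 2 becomes an equality before taking limits, and passing $n\to\infty$ by the same uniform integrability argument gives $v(t,x)=\widetilde J_t(u^*)$.

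\textbf{Step 4 (conclusion).} Combining Steps 2 and 3 gives $v=v^*$, and the control \eqref{eq:u^*_v} is optimal. Uniqueness of the minimizer in Step 1, from strong convexity, makes $u^*$ well defined.
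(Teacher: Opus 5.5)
Your argument runs in the opposite direction from the paper's. The paper takes the dynamic programming principle on $[t,t+\delta]$ as given and assumes the value function $v^*$ itself is $\mathcal{C}^{1,2}$. It then applies It\^o's formula and collects the $\mathcal{O}(\delta)$ terms to get \eqref{eq:hjb-ell}, and simply reads off $u^*$ as the pointwise minimizer. It never shows that this feedback attains the infimum, nor that the assumed classical solution $v$ equals $v^*$. Your verification argument, once completed, gives both conclusions from the classical solution alone. It needs neither the DPP nor any a priori smoothness of $v^*$, so it is the better fit for the statement as phrased. Steps 1, 2 and 4 are sound, with one caveat. The global Lipschitz bounds on $\hat u$ in Step 1 need a strong-convexity modulus $\lambda>0$ uniform in $(t,x)$. (H1) does not literally state this, although the paper tacitly uses such a $\lambda_\ell$ later.

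The gap is Step 3, and the stated hypotheses do not close it. Knowing only that $v\in\mathcal{C}^{1,2}$ has polynomial growth, neither your argument nor the hypotheses give a linear-growth bound on $\nabla v$. The feedback drift $\hat u(s,x,\nabla v(s,x))$ is therefore only locally Lipschitz. You get a strong solution up to an explosion time, but neither non-explosion before $T$ nor moment bounds for $X^*$.

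Your fallback does not fix this:
\begin{itemize}
\item Girsanov needs a Novikov/Bene\v{s}-type (at most linear growth) condition on the added drift.
\item Krylov-type weak existence needs bounded or linearly growing drift.
\item Localizing by $\tau_n$ only yields $v(t,x)=\mathbb{E}[\int_t^{\tau_n}\ell\,ds+v(\tau_n,X^*_{\tau_n})]$. Passing to the limit requires exactly the missing non-explosion and uniform integrability.
\end{itemize}
Appealing to ``the same uniform integrability argument'' as in Step 2 is circular. There the moments came from admissibility of $u$, which is precisely what you must prove for $u^*$.

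So an extra hypothesis is unavoidable, as in the standard verification theorem (Fleming--Soner). You can assume the closed-loop equation is well posed with $u^*$ admissible, or assume a growth bound on $\nabla v$. The growth bound works cleanly here:
\begin{itemize}
\item Since $\ell\ge 0$ and $\ell(t,x,0)=0$ by (H3), you have $\hat u(t,x,0)=0$.
\item Strong monotonicity of $\nabla_u\ell$ then gives $|\hat u(t,x,p)|\le |p|/\lambda$.
\item Hence linear growth of $\nabla v$ makes the closed loop globally well posed with all polynomial moments. The bound $\|\nabla_x v^*\|_\infty\le L_1$ claimed in Theorem~\ref{theo:Regularity} would also do.
\end{itemize}
Step 3 then goes through exactly like Step 2, with equality. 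The paper's own formal derivation does not address this point at all.
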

\begin{proof}
With assumptions (H1)-(H7), we define the optimal value function at time t as \ref{eq:v^*}.
The Dynamic Programming Principle implies that for any $t \in [0, T)$ and any $\delta  > 0$ such that $t + \delta  \le T$, $v^*$ satisfies the Bellman equation:
\begin{equation}
\label{eq:Bellman_equation}
v^*(t, x) = \inf_{u} \mathbb{E}\left[  v^*(t+\delta , X_{t+\delta })+\int_t^{t+\delta } \ell(s, X_s, u_s)ds  \right].
\end{equation}

By assuming $v(t,x) \in \mathcal{C}^{1,2}$ , applying Itô's Lemma to $v^*(t, X_t)$ and collecting $\mathcal{O}(\delta)$ terms, the Bellman equation, in its differential form, becomes the HJB equation:$$\inf_{u} \left\{\frac{\partial v}{\partial t} + \frac{\w{\sigma}^2(t)}{2} \Delta v + (\w{b}(t,x) + u) \cdot \nabla v   + \ell(t,x,u) \right\} = 0.$$
The optimal control $u^*$ is  given as \eqref{eq:u^*_v}.
\end{proof}

\begin{theorem}[Regularity and Well-Posedness]
\label{theo:Regularity}
Suppose that Assumptions~\textnormal{(H1)--(H7)} hold.  
Then the generalized fine-tuning (GFT) problem~\ref{prob:GFT} is well-posed, and the following properties hold:

\begin{itemize}
\item[\textbf{(i)}]  
There exists a unique classical solution 
\[
v^*(t,x) \in C^{1,2}([0,T]\times\mathbb{R}^d)
\]
to the Hamilton--Jacobi--Bellman (HJB) equation~\eqref{eq:hjb-ell}.  
This function coincides with the optimal value function defined in~\eqref{eq:v^*}.  
Moreover, there exist constants $L_1,L_2<\infty$, depending only on the structural parameters $(\lambda_\ell,L_b,L_\sigma,L_\ell,L_r)$, such that
\[
\|\nabla_x v^*\|_\infty \le L_1, 
\qquad 
\|\nabla_{xx}^2 v^*\|_\infty \le L_2,
\]
and consequently $\|v^*(t,\cdot)\|_{\mathrm{Lip}}\le L_1$ for all $t\in[0,T]$.

\item[\textbf{(ii)}] 
There exists a unique optimal feedback control 
\[
u^*(t,x) = \arg\min_{u\in\mathbb{R}^d}\{\ell(t,x,u)+u\cdot\nabla_x v^*(t,x)\}
\]
solving Problem~\ref{prob:GFT}.  
The feedback map $u^*(t,x)$ is continuously differentiable in $x$ and globally Lipschitz; in particular,
\[
u^* \in C^{0,1}([0,T]\times\mathbb{R}^d;\mathbb{R}^d),
\qquad 
\nabla_x u^* \in L^\infty([0,T]\times\mathbb{R}^d),
\]
and $\nabla_x u^*(t,x)$ is Lipschitz continuous in $x$ uniformly over $t$.
\end{itemize}
\end{theorem}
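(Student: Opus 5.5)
The plan is to pass through the Hamiltonian and treat \eqref{eq:hjb-ell} as a semilinear parabolic equation with a smooth, gradient-dependent nonlinearity. Define
\[
H(t,x,p):=\inf_{u\in\mathbb{R}^d}\{\ell(t,x,u)+u\cdot p\}=-\ell^*(t,x,-p),
\]
where $\ell^*$ is the Legendre transform of $\ell$ in $u$. By (H1) the infimum is attained at a unique point $\hat u(t,x,p)$, characterized by $\nabla_u\ell(t,x,\hat u)=-p$. Strong convexity with modulus $\lambda_\ell$, together with the implicit function theorem, shows that $\hat u$ is Lipschitz in $p$ with constant $\lambda_\ell^{-1}$ and that $\nabla_p H=\hat u$. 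Differentiating the optimality condition in $x$ and using (H5)--(H6) gives
\[
\nabla_x\hat u=-(\nabla^2_{uu}\ell)^{-1}\nabla_x\nabla_u\ell .
\]
This is bounded by $L^\ell_{0,u}/\lambda_\ell$ and is Lipschitz in $x$. Consequently $H$ is $C^{1}$ in $(x,p)$ with locally Lipschitz derivatives, of at most quadratic growth in $p$, and Lipschitz in $x$ with constant $L^\ell_0$. Part (ii) then follows from part (i): once $\nabla_x v^*$ is bounded and Lipschitz, we set $u^*(t,x)=\hat u(t,x,\nabla_x v^*(t,x))$. By the chain rule its $x$-derivative is $\nabla_x\hat u+\nabla_p\hat u\,\nabla^2_{xx}v^*$, which is bounded by $L^\ell_{0,u}/\lambda_\ell+L_2/\lambda_\ell$. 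For the Lipschitz property of $\nabla_x u^*$ I would additionally need a third-derivative bound on $v^*$, which I plan to get from Schauder theory in the next step.

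For existence in part (i), I would first prove an a priori gradient bound and then solve a truncated problem. Write $v$ as the solution of the FBSDE
\[
dX_s=\tilde b\,ds+\tilde\sigma\,dW_s,\qquad -dY_s=H(s,X_s,Z_s/\tilde\sigma)\,ds-Z_s\,dW_s,\qquad Y_T=r(X_T),
\]
so that $Y_s=v(s,X_s)$ and $Z_s=\tilde\sigma\nabla v(s,X_s)$. Differentiating in the initial point gives a linear BSDE for $\nabla_x Y$. That BSDE has bounded terminal datum $\nabla r\,\nabla X_T$ and bounded source $\nabla_x H$. Its coefficient $\nabla_pH=\hat u$ can be removed by a Girsanov change of measure, which is legitimate because the relevant Doléans exponential is a BMO martingale in the quadratic-growth setting. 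This yields
\[
\|\nabla v\|_\infty\le L_1:=e^{\|\nabla\tilde b\|_\infty T}\bigl(\|\nabla r\|_\infty+TL^\ell_0\bigr).
\]
With $L_1$ in hand, replace $H$ by $H_R$, which agrees with $H$ on $|p|\le R$ for some $R>L_1$ and is globally Lipschitz. The resulting BSDE is Lipschitz, so Pardoux--Peng theory gives a unique solution. The nonlinear Feynman--Kac formula, combined with uniform ellipticity (H7) and Schauder estimates for the linearized equation, gives $v\in C^{1+\alpha/2,2+\alpha}$ with $\|\nabla^2 v\|_\infty\le L_2$; here (H4) provides the $C^{2+\alpha}$ terminal datum. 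Since the gradient bound applies to the truncated problem as well, the truncation is never active and $v$ solves the original HJB.

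Next comes verification. For an admissible $u$, apply Itô's formula to $v(t,X^u_t)$ as in Proposition \ref{pro-hjb-ell}, and use $\ell+u\cdot\nabla v\ge H$. The stochastic integral is a true martingale because $\nabla v$ is bounded, so $v(t,x)\le J_t(u)$. Equality holds exactly for $u=\hat u(\cdot,\nabla v)$, and this control is admissible because it is Lipschitz, so the closed-loop SDE has a unique strong solution. Hence $v=v^*$. Uniqueness of the optimal control follows from the strict convexity in (H1): any other optimal control must minimize the Hamiltonian $dt\otimes dP$-almost everywhere. Uniqueness of the classical solution among functions of polynomial growth follows because the same verification argument identifies any such solution with $v^*$, after localization to handle the polynomial growth.

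I expect the main obstacle to be the a priori gradient estimate, because $H$ may have quadratic growth in $p$. I would try the Girsanov/BMO argument first. If it fails, the fallback is a Bernstein-type maximum principle applied to $w=|\nabla v|^2$, which uses the structure $\nabla_pH=\hat u$ and the bounds on $\nabla_x H$. A second delicate point is that the Lipschitz property of $\nabla_x u^*$ needs $\nabla^3 v^*$ to be bounded. To obtain it I would differentiate the HJB once and apply interior Schauder estimates to the equation satisfied by $\nabla v$; this uses (H6) and the Lipschitz property of $\nabla\tilde b$ from (H7).
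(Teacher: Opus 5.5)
The step that fails is the last assertion of (ii): that $\nabla_x u^*$ is Lipschitz in $x$ uniformly over $t$. You correctly see that this needs a bound on $\nabla^3 v^*$, but interior Schauder estimates for the differentiated HJB only give such a bound on $[0,T-\varepsilon]$. Up to $t=T$, the equation for $\nabla v^*$ has terminal datum $\nabla r\in C^{1+\alpha}$. That makes $\nabla^2 v^*(t,\cdot)$ uniformly $\alpha$-H\"older but not Lipschitz, and $\|\nabla^3 v^*(t,\cdot)\|_\infty$ can blow up like $(T-t)^{-(1-\alpha)/2}$.

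In fact the claim is false under (H4). Take $\ell=\frac{\kappa}{2}|u|^2$ with a constant $\kappa>0$, $\tilde b=0$ and $\tilde\sigma=1$. Then $\hat u=-p/\kappa$, $u^*=-\nabla v^*/\kappa$, and $\nabla_x u^*(T,\cdot)=-\nabla^2 r/\kappa$, which for a general $r\in C^{2+\alpha}$ is only $\alpha$-H\"older.

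Separately, your claim that $x\mapsto\nabla_x\hat u(t,x,p)=-(\nabla^2_{uu}\ell)^{-1}\nabla_x\nabla_u\ell$ is Lipschitz does not follow from (H5)--(H6). Because $\hat u$ itself moves with $x$, you need $\nabla^2_{uu}\ell$ and $\nabla_x\nabla_u\ell$ to be Lipschitz in $u$ as well, whereas (H6) only gives Lipschitz dependence on $x$, uniformly in $u$. So this part of (ii) needs stronger hypotheses. At minimum you need $\nabla^2 r$ Lipschitz (e.g.\ $r\in C^{3+\alpha}$ with bounded derivatives) and $\nabla_u\ell$ with Lipschitz first derivatives jointly in $(x,u)$; with these, your differentiated-equation argument can be carried up to $t=T$. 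The paper does not establish this either: its fixed point only places $u^*$ in $\mathcal{B}_R$, which bounds the Lipschitz constant of $u^*$ itself, and the Lipschitz property of $\nabla_x u^*$ is simply asserted.

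Apart from this, your proposal is sound and takes a different route from the paper. The paper runs a policy-iteration fixed point: for a frozen Lipschitz feedback it solves a decoupled linear FBSDE, sets $\Phi(u)=\hat u(\cdot,\nabla v^u)$, and claims $\Phi$ contracts in a weighted Lipschitz seminorm. That route would be constructive, but as written it does not close. Step 3 bounds $\Phi(u^1)-\Phi(u^2)$ only pointwise, through $\nabla v^1-\nabla v^2$, while claiming contraction in a Lipschitz seminorm. In addition, the self-map radius $R_0$ depends on $C_2(R)$.

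Your a priori estimate avoids these problems. The constant $L_1=e^{\|\nabla\tilde b\|_\infty T}(\|\nabla r\|_\infty+TL^\ell_0)$ uses only $|\nabla_x H|\le L^\ell_0$ and the pathwise bound on $\nabla X$ (the noise is additive), so it is independent of the truncation level. You also carry out the verification and the uniqueness of the optimal control explicitly, which the paper leaves to a remark.

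Three tightenings are needed.
\begin{enumerate}
\item Do not invoke BMO for the untruncated equation: with the unbounded terminal value $r(X_T)$, the BMO property of $Z$ is not standard. Run the Girsanov estimate directly on the truncated equation, where the kernel is bounded. Use $H_R(t,x,p)=H(t,x,\pi_R(p))$ with $\pi_R$ the projection onto $\{|p|\le R\}$, so that $|\nabla_x H_R|\le L^\ell_0$ and the same $L_1$ results.
\item State $\|\nabla r\|_\infty<\infty$ as a hypothesis. (H4) only bounds $\nabla^2 r$, but the statement forces this because $v^*(T,\cdot)=r$, and the paper tacitly uses $L^r_0$ as well.
\item Restrict the uniqueness class. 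For a competing classical solution $w$ of polynomial growth, localization gives $w\le v^*$ by testing with the bounded control $u^*$. The reverse inequality $w\ge v^*$, however, needs the feedback $\hat u(\cdot,\nabla w)$ to generate a non-exploding closed loop with enough moments. This is automatic when $\nabla w$ is bounded, but not in general, so state uniqueness among classical solutions with bounded gradient.
\end{enumerate}
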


\begin{proof}[Proof of Theorem~\ref{theo:Regularity}]
We work under Assumptions~(H1)--(H7).  
Fix $\beta>0$ and define the weighted Lipschitz norm
\[
\|u\|_{\mathrm{Lip},\beta}
=\sup_{t\in[0,T]}e^{-\beta t}\sup_{x\neq y}\frac{\|u(t,x)-u(t,y)\|}{\|x-y\|}.
\]
Let $\mathcal{B}_R:=\{u:\|u\|_{\mathrm{Lip},\beta}\le R\}$.

\paragraph{Step 1. (Regularity for fixed $u$).}
For $u\in\mathcal{B}_R$, consider the FBSDE
\[
\begin{cases}
dX_s^{t,x}=(\w b(s,X_s^{t,x})+u(s,X_s^{t,x}))\,ds+\w\sigma(s)\,dB_s,\\[3pt]
dY_s^{t,x}=-\ell(s,X_s^{t,x},u(s,X_s^{t,x}))\,ds+Z_s^{t,x}\,dB_s,\\[3pt]
Y_T^{t,x}=r(X_T^{t,x}).
\end{cases}
\]
Under (H1)--(H7), all coefficients are $C^2$ in $x$, globally Lipschitz, and $\w\sigma$ is uniformly elliptic.  
By the Four-Step Scheme \textnormal{(Ma, Yong \& Zhang 1999, Thm.~5.3.3)} or the regularity theory for coupled FBSDEs \textnormal{(Delarue 2002, Thm.~2.1)}, there exists a unique decoupling field $v^u\in C^{1,2}$ satisfying
\[
\partial_t v^u + (\w b+u)\cdot\nabla_x v^u
+ \tfrac{1}{2}\operatorname{Tr}(\w\sigma\w\sigma^\top\nabla_{xx}^2 v^u)
+ \ell(t,x,u)=0,\qquad v^u(T,x)=r(x),
\]
together with uniform bounds (over $(t,x)\in[0,T]\times\mathbb{R}^d$):
\[
\sup_{t,x}\|\nabla_x v^u(t,x)\|\le C_1(R),\qquad
\sup_{t,x}\|\nabla_{xx}^2 v^u(t,x)\|\le C_2(R),
\]
for some continuous functions $C_1,C_2$ depending on $R=\|u\|_{\mathrm{Lip},\beta}$.

\paragraph{Step 2. (Definition of $\Phi$ and self-mapping).}
Given $v^u$, define
\[
\nabla_u\ell(t,x,u'(t,x))+\nabla_x v^u(t,x)=0,\qquad
u'=\Phi(u).
\]
By strong convexity of $\ell$ in $u$ ($\lambda_\ell>0$), there exists a unique minimizer $u'(t,x)$ for each $(t,x)$ (\emph{pointwise uniqueness}).  
By the implicit function theorem (since $\nabla_{uu}^2\ell$ is invertible with eigenvalues $\ge\lambda_\ell$), $u'\in C^1$ in $x$.  
Differentiating yields
\[
\nabla_x u'(t,x)
=-(\nabla_{uu}^2\ell)^{-1}(\nabla_{xu}^2\ell+\nabla_{xx}^2 v^u),
\]
so that
\[
\|\nabla_x u'\|_\infty
\le\tfrac{1}{\lambda_\ell}(L_{xu}^\ell+C_2(R))
+\tfrac{L_{uu}^\ell}{\lambda_\ell^2}(L_{xu}^\ell+C_2(R)).
\]
Choosing 
\[
R \ge R_0 := 
\tfrac{1}{\lambda_\ell}(L_{xu}^\ell + C_2(R)) 
+ \tfrac{L_{uu}^\ell}{\lambda_\ell^2}(L_{xu}^\ell + C_2(R))
\]
ensures $\Phi(\mathcal{B}_R)\subset\mathcal{B}_R$.

\paragraph{Lemma 1 (Forward stability).}
Let $u^1,u^2\in\mathcal{B}_R$.  
If $X^i$ solve their respective forward SDEs, then
\[
\sup_{s\in[t,T]}\mathbb{E}\|X_s^1-X_s^2\|^2
\le C_\beta\,\|u^1-u^2\|_{\mathrm{Lip},\beta}^2,
\qquad
C_\beta=\tfrac{C(e^{2\beta T}-1)}{\beta}.
\]

\emph{Proof.}  
By Itô’s formula,
\[
d\|X^1_s-X^2_s\|^2
\le C\|X^1_s-X^2_s\|^2\,ds + C\|u^1(s,X^1_s)-u^2(s,X^2_s)\|^2\,ds + dM_s.
\]
Using the triangle inequality and Lipschitz continuity,
\[
\|u^1(s,X^1_s)-u^2(s,X^2_s)\|
\le \|u^1(s,X^1_s)-u^1(s,X^2_s)\| + \|u^1(s,X^2_s)-u^2(s,X^2_s)\|
\le e^{\beta s}\|u^1-u^2\|_{\mathrm{Lip},\beta}(\|X^1_s-X^2_s\|+\|X^2_s\|).
\]
Taking expectations, using the linear growth bound 
$\mathbb{E}\|X^2_s\|^2 \le C(1+|x|^2)$, 
and applying Gronwall’s inequality yields the result. $\square$

\paragraph{Lemma 2 (BSDE stability).}
Let $(Y^i,Z^i)$ correspond to $u^i$. Then
\[
\sup_{s\in[t,T]}\mathbb{E}\|Y_s^1-Y_s^2\|^2
+ \mathbb{E}\!\int_t^T\!\|Z_s^1-Z_s^2\|^2\,ds
\le C_\beta'\,\|u^1-u^2\|_{\mathrm{Lip},\beta}^2,
\]
where $C_\beta'$ depends on $C_\beta$ from Lemma~1.
\emph{Proof.}  
Subtract the BSDEs and apply standard $L^2$ energy estimates (\textnormal{Pardoux \& Peng 1990, Lemma~2.1}) and Lemma~1. $\square$

\paragraph{Lemma 3 (Gradient stability).}
Let $Y^{i,x}_s:=v^i(s,X^{i,t,x}_s)$, where $X^{i,t,x}$ starts from $(t,x)$.  
Define the difference quotient 
\[
\Delta^\varepsilon Y^i_s := \tfrac{Y^{i,x+\varepsilon h}_s-Y^{i,x}_s}{\varepsilon},\qquad |h|=1.
\]
Then $(\Delta^\varepsilon Y^i_s,\Delta^\varepsilon Z^i_s)$ satisfies a linear variational BSDE obtained by differentiating the original BSDE w.r.t.\ the initial condition~$x$ (see \textnormal{Delarue 2002, Lemma~2.4}).  
By BSDE stability,
\[
E\sup_{s\in[t,T]}|\Delta^\varepsilon Y^1_s-\Delta^\varepsilon Y^2_s|^2
\le\tfrac{C(e^{2\beta T}-1)}{\beta}\|u^1-u^2\|_{\mathrm{Lip},\beta}^2.
\]
Letting $\varepsilon\downarrow0$, by the $C^{1,2}$ regularity of $v^i$ (Step~A0), we have 
$\Delta^\varepsilon Y^i_s \to \nabla_x v^i(s,X^{i,t,x}_s)\cdot h$ in $L^2$.  
Using the martingale representation $Z_s^i=\w\sigma(s)^\top\nabla_x v^i(s,X^i_s)$ gives
\[
\sup_{s,x}\|\nabla_x v^1(s,x)-\nabla_x v^2(s,x)\|
\le K(\beta)\|u^1-u^2\|_{\mathrm{Lip},\beta},\qquad K(\beta)\le\tfrac{C}{\beta}.
\]
The factor $1/\beta$ arises from the time integral 
$\int_s^T e^{\beta r}\,dr=(e^{\beta T}-e^{\beta s})/\beta$ in the weighted norm. $\square$

\paragraph{Step 3. (Contraction and uniqueness).}
For $u'^i=\Phi(u^i)$,
\[
\nabla_u\ell(t,x,u'^1)-\nabla_u\ell(t,x,u'^2)
=-(\nabla_x v^1-\nabla_x v^2).
\]
By strong monotonicity of $\nabla_u\ell$,
\[
\|u'^1-u'^2\|
\le\tfrac{1}{\lambda_\ell}\|\nabla_x v^1-\nabla_x v^2\|,
\]
thus
\[
\|\Phi(u^1)-\Phi(u^2)\|_{\mathrm{Lip},\beta}
\le\tfrac{K(\beta)}{\lambda_\ell}\|u^1-u^2\|_{\mathrm{Lip},\beta}.
\]
Since $K(\beta)\le C/\beta$, choosing $\beta>C/\lambda_\ell$ makes $\Phi$ a strict contraction.  
Hence a unique fixed point $u^*\in\mathcal{B}_R$ exists (\emph{functional uniqueness}),  
and the associated decoupling field $v^*=v^{u^*}\in C^{1,2}$ inherits the bounded derivative properties of Step~A0.

\paragraph{Conclusion.}
The fixed-point construction establishes Theorem~\ref{theo:Regularity}:  
(i) uniqueness and regularity of $v^*$ as in part~(i) of the theorem;  
(ii) uniqueness and regularity of $u^*$ as in part~(ii);  
and (iii) all constants depend only on the structural parameters, not on $(t,x)$.
\end{proof}

\begin{remark}
The value function $v^*$ satisfies the dynamic programming principle:  
for any $t\le s\le T$ and any admissible control $u(\cdot)$ on $[t,s]$,
\[
v^*(t,x)
\le \mathbb{E}\!\left[\int_t^s \ell(r,X_r,u_r)\,dr + v^*(s,X_s)\;\big|\;X_t=x\right],
\]
with equality for the optimal control $u^*$.  
Together with the regularity above, this implies that $v^*$ satisfies the HJB equation 
(\textnormal{Fleming \& Soner, \emph{Controlled Markov Processes and Viscosity Solutions}, 2006}).
\end{remark}

Unlike classical quadratic control, minimization with respect to $u$ will not necessarily result in a closed-form solution. As a result, the HJB equation cannot be formulated to be a linear or semi-linear PDE, which leads to a probabilistic formulation in terms of FBSDEs.

\subsection{FBSDE-based solution for generalized fine-tuning}

The introduction of a generalized running cost $\ell(t,x,u)$ significantly complicates the structure of the associated HJB equation. In the quadratic case, the minimization over $u$ admits an explicit closed-form solution, reducing the HJB equation to a semilinear PDE that can be addressed by classical techniques. However, for general nonlinear cost functions, the minimization does not yield an analytic expression for $u^*$, and consequently the HJB equation becomes a fully nonlinear PDE. Direct numerical approximation of such fully nonlinear PDEs is computationally formidable, particularly in high-dimensional state spaces.  To overcome this challenge, we resort to the probabilistic formulation based on FBSDE. Under suitable regularity conditions, the solution of the HJB equation can be equivalently characterized by the solution of a coupled FBSDE system.  The forward component corresponds to the state process $X_t$, while the backward component introduces auxiliary processes $(Y_t,Z_t)$, which can be interpreted as the value function $v(t,x)$ and its gradient with respect to the state, respectively. 
\begin{theorem}[Existence and Uniqueness of BSDE]
\label{thm:fbsde_value_existence}
Assume that $\ell(t,x,u)$ satisfies assumptions (H1)–(H7). Let $\delta > 0$ and consider the following Backward Stochastic Differential Equation (BSDE) :
\begin{equation}
\label{eq:robust_BSDE}
    dY_t = -\left[\inf_{u \in \mathbb{R}^d} \left\{ \ell(t,X_t,u_t) + u \cdot \frac{Z_t}{\w\sigma(t)} \right\}  \right] dt + Z_t dB_t, \quad Y_T = r(X_T).
\end{equation}
where $X_t$ follows 
\begin{equation}
dX_t = \w b(t,X_t)dt + \w\sigma(t) dB_t,\quad X_0=x.    
\end{equation}

Then, there exist a unique adapted solution pair $(Y_t, Z_t)$ and a constant $C > 0$ such that
\begin{equation}
\mathbb{E}\left[ \sup_{0 \le t \le T} |Y_t|^2 + \int_0^T \|Z_t\|^2 dt \right] \le C \left( 1 + \|r\|_{L^2}^2 \right).
\end{equation}
\begin{equation}\label{eq:est}
\mathbb{E}\left[\sup_{0\le t\le T}|Y_t|^2 + \int_0^T |Z_t|^2 dt\right] \le C\bigl(1+\mathbb{E}[|r(X_T)|^2]\bigr).
\end{equation}

\end{theorem}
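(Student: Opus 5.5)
The plan is to treat the equation in the statement as a decoupled BSDE. The forward process $X$ has Lipschitz drift $\w b$ and a deterministic, bounded diffusion $\w\sigma$, so it is well posed. It has moments of all orders, $\mathbb{E}\sup_t|X_t|^q\le C_q(1+|x|^q)$, by (H7) and standard SDE estimates. The backward equation then reads $dY_t=-f(t,X_t,Z_t)\,dt+Z_t\,dB_t$ with generator
\[
f(t,x,z):=\inf_{u\in\mathbb{R}^d}\Big\{\ell(t,x,u)+u\cdot \tfrac{z}{\w\sigma(t)}\Big\}=-\ell^*\big(t,x,-z/\w\sigma(t)\big),
\]
where $\ell^*(t,x,\cdot)$ is the convex conjugate of $\ell(t,x,\cdot)$. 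Once $f$ has the right growth and regularity, I would invoke Pardoux--Peng (1990) for existence and uniqueness. The a priori estimate would come from the standard $L^2$ energy estimate: apply It\^o's formula to $e^{\gamma t}|Y_t|^2$, use Young's inequality, then the Burkholder--Davis--Gundy inequality for the $\sup$ term.

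The key step is analysing $f$. By (H1), $\ell(t,x,\cdot)$ is $\lambda_\ell$-strongly convex, so for each $(t,x,z)$ the infimum is attained at a unique point $\hat u(t,x,z)$. This point is characterised by $\nabla_u\ell(t,x,\hat u)=-z/\w\sigma(t)$. By the envelope theorem, $\nabla_z f=\hat u/\w\sigma(t)$. Strong monotonicity of $\nabla_u\ell$ gives $|\hat u(t,x,z_1)-\hat u(t,x,z_2)|\le |z_1-z_2|/(\lambda_\ell c_0^{1/2})$, using the ellipticity bound $\w\sigma\ge c_0^{1/2}$ from (H7). For $x$-dependence, the envelope theorem again together with (H5) gives $|f(t,x_1,z)-f(t,x_2,z)|\le L_0^\ell|x_1-x_2|$. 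For the size of $f$: (H3) and nonnegativity give $f(t,x,0)=\inf_u \ell(t,x,u)=0$. Also $f(t,x,z)\le \ell(t,x,0)=0$. Moreover, (H2) gives the lower bound $f\ge -C(1+|z|^{p/(p-1)})$.

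The main obstacle is that $f$ is only locally Lipschitz in $z$ in general. The estimate on $\hat u$ controls $\nabla_z f$ by $|\hat u|$, and $|\hat u|$ grows linearly in $|z|$, so $f$ is in general quadratic in $z$, exactly as in the KL case $\ell=\frac{\alpha}{2}|u/\w\sigma|^2$. I would handle this in one of two ways.

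\emph{Route (a): a Lipschitz bound on $Z$.} If we can show a priori that $Z$ is bounded, then on the set $|z|\le M$ the generator $f$ is globally Lipschitz. The bound comes from the Markovian structure: by Theorem~\ref{theo:Regularity}(i), $|\nabla_x v^*|\le L_1$, and hence $|Z_t|=|\w\sigma(t)\nabla_x v^*(t,X_t)|\le \|\w\sigma\|_\infty L_1=:M$. So I would truncate the generator, setting $f_M(t,x,z):=f(t,x,\pi_M(z))$ with $\pi_M$ the projection onto the ball of radius $M$. The truncated generator $f_M$ is Lipschitz in $z$ and satisfies $|f_M(t,x,0)|=0$. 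Pardoux--Peng then gives a unique solution of the truncated BSDE. I would identify that solution with $(v^*(t,X_t),\w\sigma\nabla v^*(t,X_t))$ via It\^o's formula and the HJB equation~\eqref{eq:hjb-ell}; this identification is also what lets the truncation be removed.

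\emph{Route (b): quadratic BSDE theory.} For uniqueness among unbounded $Z$, I would fall back on Kobylanski's quadratic BSDE theory. This needs the assumption that $r$ is bounded (or Lipschitz, with (H4)), which is the fragile point I expect to need care.

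Finally, with the linear-growth/Lipschitz structure in hand, the estimate follows. Apply It\^o's formula to $|Y_t|^2$:
\[
|Y_t|^2+\int_t^T|Z_s|^2ds=|r(X_T)|^2+2\int_t^T Y_s f(s,X_s,Z_s)\,ds-2\int_t^T Y_sZ_s\,dB_s.
\]
Bound the drift term by $2|Y_s|C(1+|Z_s|)\le C'(1+|Y_s|^2)+\tfrac12|Z_s|^2$, then apply Gronwall's inequality and then BDG. This yields~\eqref{eq:est}, and the first displayed bound follows by interpreting $\|r\|_{L^2}^2$ as $\mathbb{E}|r(X_T)|^2$.
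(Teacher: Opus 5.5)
Your existence argument and the a priori estimate via Route (a) are sound, granted Theorem~\ref{theo:Regularity}(i). By It\^o's formula and the HJB equation, $(v^*(t,X_t),\w\sigma(t)\nabla_x v^*(t,X_t))$ solves the equation, its $Z$-component is bounded by $M=\|\w\sigma\|_\infty L_1$, and $f_M$ is globally Lipschitz in $z$ with $f_M(t,x,0)=0$. This is a different route from the paper. The paper never uses the value function: it treats the equation directly as a quadratic BSDE with unbounded terminal value (Briand--Hu), using exponential integrability of $r(X_T)$ for existence and an exponential transform for the estimate.

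\textbf{The gap is uniqueness.} Pardoux--Peng applied to $f_M$ gives uniqueness only for the \emph{truncated} equation. A solution $(Y,Z)$ of the original equation whose $Z$ is not bounded by $M$ does not solve the truncated equation. So Route (a) proves uniqueness only among solutions with $\|Z\|_\infty<\infty$ (truncate at $\max(M,\|Z\|_\infty)$). Route (b) does not repair this. Kobylanski's theorem needs a bounded terminal value, and $r(X_T)$ is in general unbounded even for Lipschitz $r$, since $X_T$ has Gaussian tails. Lipschitz continuity of $r$ cannot replace boundedness in that theorem.

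The missing idea is that $z\mapsto f(t,x,z)=-\ell^*(t,x,-z/\w\sigma(t))$ is concave, combined with exponential integrability. Concavity already gives you one half. Let $(Y,Z)\in\mathcal S^2\times\mathcal H^2$ be any solution and $(Y^*,Z^*)$ yours. Then $f(Z)-f(Z^*)\le\nabla_z f(Z^*)\cdot(Z-Z^*)$, with $\nabla_z f(Z^*)=\hat u(Z^*)/\w\sigma$ bounded, and a Girsanov change of measure yields $Y\le Y^*$.

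The reverse inequality would require Girsanov with kernel $\hat u(t,X_t,Z_t)/\w\sigma(t)$. This kernel grows linearly in $|Z_t|$ and satisfies neither Novikov's condition nor a BMO bound in general. Closing this requires the uniqueness theorem for quadratic BSDEs with convex (here concave) generators. It holds among solutions for which $\sup_t|Y_t|$ has exponential moments of all orders, provided $\mathbb E[e^{\gamma|r(X_T)|}]<\infty$ for every $\gamma>0$. That condition holds for Lipschitz $r$ (which the paper's proof, and also the gradient bound in Theorem~\ref{theo:Regularity}(i), tacitly assume) by the sub-Gaussian bound on $\sup_t\|X_t\|$. This is the mechanism the paper's Steps 2--3 appeal to.

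A smaller point: your bound $f\ge -C(1+|z|^{p/(p-1)})$ is worse than quadratic for $p<2$ and void for $p=1$. Instead, (H1) and (H3) give $\ell(t,x,u)\ge\frac{\lambda_\ell}{2}|u|^2$. Hence $-|z|^2/(2\lambda_\ell c_0)\le f(t,x,z)\le 0$, which is the quadratic growth any quadratic-BSDE argument needs.
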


\begin{proof}
The proof relies on the theory of BSDEs with quadratic growth and unbounded terminal values, specifically Theorem 2 and Proposition 5 in [Briand and Hu, 2006].

\noindent\textbf{Step 1. Quadratic growth of the generator.}
Recall the generator $f(t,x,z) = -h(t,x, \sigma(t)^{-1}z)$, where $h$ is the Fenchel--Legendre transform of $\ell$.
By Assumption (H2), $\ell$ satisfies the coercivity condition $\ell(t,x,u) \ge c_1 \|u\|^p - c_2$ for $p \ge 2$.
The growth of the conjugate function $h$ is determined by the dual exponent $q$, where $1/p + 1/q = 1$.
Specifically, for any $v \in \mathbb{R}^d$,
\begin{align*}
    h(t,x,v) &= \sup_{u \in \mathbb{R}^d} \{-u \cdot v - \ell(t,x,u)\} \\
    &\le \sup_{u \in \mathbb{R}^d} \{-u \cdot v - c_1 \|u\|^p \} + c_2.
\end{align*}
Applying Young's inequality $a b \le \frac{1}{p}(\epsilon a)^p + \frac{1}{q}(b/\epsilon)^q$, we obtain
\begin{equation*}
    |h(t,x,v)| \le C(1 + \|v\|^q).
\end{equation*}
Since $p \ge 2$, we have $1 < q \le 2$. Given the uniform boundedness of $\sigma^{-1}(t)$ from (H7), there exists a constant $K > 0$ such that
\begin{equation}\label{eq:quad_growth}
    |f(t,x,z)| \le |h(t,x, \sigma^{-1}(t)z)| + \delta \|z\| \le K(1 + \|z\|^2), \quad \forall z \in \mathbb{R}^d.
\end{equation}
Furthermore, the smoothness of $\ell$ (H5) implies that $f$ is continuous in $x$ and locally Lipschitz in $z$.

\noindent\textbf{Step 2. Exponential integrability of the terminal value.}
Under (H7), the drift $\tilde{b}$ has linear growth and $\tilde{\sigma}$ is bounded. Standard estimates for SDEs imply the sub-Gaussian property of $\sup_{t \in [0,T]} \|X_t\|$.
Specifically, there exists $\mu > 0$ such that
\begin{equation}\label{eq:X_exp_moment}
    \mathbb{E}\left[ \exp\left( \mu \sup_{0\le t\le T} \|X_t\|^2 \right) \right] < \infty.
\end{equation}
Assumption (H4) ensures $|r(x)| \le L_0^r \|x\| + |r(0)|$.
Consequently, for any $\lambda > 0$, using the inequality $a \le a^2 + 1/4$, we have
\begin{equation*}
    \exp(\lambda |r(X_T)|) \le C_\lambda \exp( \lambda L_0^r \|X_T\|) \le C_\lambda \exp(C' \|X_T\|^2).
\end{equation*}
Combining this with \eqref{eq:X_exp_moment}, the terminal value $\xi := r(X_T)$ satisfies the integrability condition
\begin{equation}\label{eq:terminal_integrability}
    \mathbb{E}\left[ \exp(\gamma |\xi|) \right] < \infty, \quad \forall \gamma > 0.
\end{equation}

\noindent\textbf{Step 3. Existence and Uniqueness.}
The generator satisfies the quadratic growth condition \eqref{eq:quad_growth}, and the terminal value satisfies the exponential moment condition \eqref{eq:terminal_integrability}.
Thus, by [Briand and Hu, 2006, Theorem 2], there exists a solution $(Y, Z)$ such that $e^{Y} \in \mathcal{S}^p$ for any $p \ge 1$.
Uniqueness follows from the comparison principle for quadratic BSDEs.
By (H1), $\ell$ is strongly convex, which implies that $u \mapsto \ell(t,x,u) + u \cdot v$ is strictly convex.
The map $z \mapsto f(t,x,z)$ is therefore differentiable (or admits a subgradient structure) satisfying the monotonicity condition required for the comparison theorem [Briand and Hu, 2006, Proposition 5].

\noindent\textbf{Step 4. $L^2$-Estimates.}
Since $(Y,Z)$ is a solution to a quadratic BSDE with terminal value having moments of all orders, we proceed with the standard a priori estimates.
Applying It\^o's formula to $|Y_t|^2$ and taking expectations:
\begin{equation*}
    \mathbb{E}[|Y_t|^2] + \mathbb{E}\int_t^T \|Z_s\|^2 ds = \mathbb{E}[|\xi|^2] + 2\mathbb{E}\int_t^T Y_s f(s, X_s, Z_s) ds.
\end{equation*}
Using the growth condition \eqref{eq:quad_growth} and Young's inequality $2Y_s K \|Z_s\|^2 \le 2K (\epsilon^{-1} |Y_s|^2 + \epsilon \|Z_s\|^4)$ is not sufficient directly.
Instead, we rely on the estimate provided in [Briand and Hu, 2006, Corollary 4], which utilizes the exponential transformation $P_t = e^{\gamma Y_t}$.
Given the integrability \eqref{eq:terminal_integrability} and \eqref{eq:X_exp_moment}, we obtain
\begin{equation}
    \mathbb{E}\left[ \sup_{0 \le t \le T} |Y_t|^2 + \int_0^T \|Z_t\|^2 dt \right] \le C \left( 1 + \mathbb{E}[|r(X_T)|^2] \right).
\end{equation}
This completes the proof.
\end{proof}

\begin{theorem}
\label{thm:fbsde}
\textbf{FBSDE Representation for Problem GFT}

Assume $v(t,x)\in\mathcal{C}^{1,2}([0,T]\times\mathbb{R}^d)$ to be the value function defined in Proposition \ref{pro-hjb-ell}, which is the solution to the HJB equation \eqref{eq:hjb-ell}. The value function $v(t,x)$ is given by the $Y$-component of the solution $(Y_t, Z_t)$ to the following Backward Stochastic Differential Equation (BSDE) system:

\begin{equation}
\begin{cases}
dX_t =\w{b}(t, X_t) dt +\w{\sigma}(t) dB_t, \quad X_0=x,\\
dY_t = -f(t, X_t, Y_t, Z_t) dt + Z_t  dB_t, \quad Y_T = r(X_T)
\end{cases}
\end{equation}
where the generator $f$ is given by the minimized Hamiltonian:
\begin{equation}
\label{eq:driver-f}
f(t,x,z) := \inf_{u \in \mathbb{R}^d} \left\{ \ell(t,x,u) + u \cdot \frac{z}{ \w{\sigma}(t)} \right\}.
\end{equation}
Furthermore, the optimal control $u^*$ is characterized by the minimizer in \eqref{eq:driver-f}:
\begin{equation}
u^*(t) = \mathrm{argmin}_{u} \left\{ \ell(t,X_t,u) + u \cdot \frac{Z_t}{ \w{\sigma}(t)} \right\}.
\end{equation}
\end{theorem}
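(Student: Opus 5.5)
The plan is a verification argument: apply Itô's formula to the classical solution $v$ along the uncontrolled forward process, read off $(Y_t,Z_t)$, and then use uniqueness from Theorem~\ref{thm:fbsde_value_existence}. Set
\[
Y_t := v(t,X_t),\qquad Z_t := \w\sigma(t)\nabla v(t,X_t),
\]
where $X$ solves $dX_t=\w b(t,X_t)\,dt+\w\sigma(t)\,dB_t$, $X_0=x$. Since $v\in\mathcal C^{1,2}$, Itô's formula gives
\[
dY_t=\Big(\partial_t v+\w b\cdot\nabla v+\tfrac{\w\sigma^2(t)}{2}\Delta v\Big)(t,X_t)\,dt+\w\sigma(t)\nabla v(t,X_t)\,dB_t .
\]
By the HJB equation \eqref{eq:hjb-ell}, the drift equals $-\inf_u\{\ell(t,X_t,u)+u\cdot\nabla v(t,X_t)\}$. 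Because $\nabla v(t,X_t)=Z_t/\w\sigma(t)$, this drift is exactly $-f(t,X_t,Z_t)$ with $f$ as in \eqref{eq:driver-f}. The terminal condition $Y_T=v(T,X_T)=r(X_T)$ is immediate, so $(Y,Z)$ solves the BSDE pathwise. The driver has no $Y$-dependence, so the notation $f(t,X_t,Y_t,Z_t)$ reduces to $f(t,X_t,Z_t)$.

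Next, $(Y,Z)$ must lie in the class where Theorem~\ref{thm:fbsde_value_existence} gives uniqueness. Polynomial growth of $v$ and the sub-Gaussian moments of $\sup_t\|X_t\|$ from the proof of that theorem give $\mathbb E\sup_t|Y_t|^2<\infty$; in fact $e^{\gamma|Y|}$ is integrable whenever $v$ grows at most linearly. Theorem~\ref{theo:Regularity}(i) gives $\|\nabla v^*\|_\infty\le L_1$, and (H7) gives boundedness of $\w\sigma$. Together these make $Z$ bounded, so the stochastic integral is a true martingale and $Z\in L^2$. Uniqueness then forces this pair to be the solution, so $v(t,X_t)=Y_t$, and in particular $v(0,x)=Y_0$. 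Theorem~\ref{theo:Regularity}(i) identifies $v$ with the value function $v^*$ in \eqref{eq:v^*}, which completes the representation.

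For the control, substitute $Z_t/\w\sigma(t)=\nabla v(t,X_t)$ into \eqref{eq:u^*_v} from Proposition~\ref{pro-hjb-ell}. This gives $u^*(t)=\mathrm{argmin}_u\{\ell(t,X_t,u)+u\cdot Z_t/\w\sigma(t)\}$, and the minimizer is unique by strong convexity (H1). A caveat: the minimization is evaluated along the uncontrolled reference process $X$, while the optimal state $X^{u^*}$ is driven by the feedback $u^*(t,x)$ of Theorem~\ref{theo:Regularity}(ii). The characterization should therefore be read in feedback form, $u^*(t,x)=\mathrm{argmin}_u\{\ell(t,x,u)+u\cdot\nabla v(t,x)\}$ with $\nabla v=z/\w\sigma$, evaluated on whichever state process is used. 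I would state this explicitly.

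The main obstacle is the uniqueness step. Theorem~\ref{thm:fbsde_value_existence} uses quadratic-BSDE uniqueness in the sense of Briand--Hu, which needs an exponential-moment class, so I must check that $Y=v(\cdot,X)$ belongs to it. If $v$ only has polynomial growth, that may fail. In that case I would instead use the global bound on $\nabla v$: since $Z$ is bounded, $f$ may be replaced by a Lipschitz truncation in $z$ without changing this solution, and standard Lipschitz BSDE uniqueness applies in $L^2$. Uniqueness within the bounded-$Z$ class is enough to identify $Y_t$ with $v(t,X_t)$.
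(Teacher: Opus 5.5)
Your proposal is correct and follows the same route as the paper: apply Itô's formula to $v(t,X_t)$ along the uncontrolled forward process, set $Z_t=\w\sigma(t)\nabla v(t,X_t)$, use the HJB equation to identify the drift as $-f(t,X_t,Z_t)$, and substitute $\nabla v=Z_t/\w\sigma(t)$ into the argmin for the control. Two of your steps go beyond the paper's proof. The paper only exhibits $(v(t,X_t),\w\sigma(t)\nabla v(t,X_t))$ as a solution. It does not show this is \emph{the} solution, which you do via bounded $Z$ from $\|\nabla v^*\|_\infty\le L_1$ plus a Lipschitz-truncation uniqueness argument. It also does not note, as you correctly do, that the control formula evaluated along the uncontrolled $X$ should be read as a feedback map.
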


\begin{proof}
The problem $\text{GFT}$ is equivalent to solving the HJB equation \eqref{eq:hjb-ell}, which defines the value function $v(t,x)$.
$$
\frac{\partial v}{\partial t} + \frac{\w{\sigma}^2(t)}{2} \Delta v +\w{b} (t,x) \cdot \nabla v + \inf_{u} \left\{ \ell(t,x,u) + u \cdot \nabla v \right\} = 0, \quad v(T,x) = r(x).
$$

Assuming $v\in\mathcal{C}^{1,2}$, apply Itô’s formula to $Y_t=v(t,X_t)$:
$$
dY_t = 
\Bigl(
\frac{\partial v}{\partial t}
+ \frac{1}{2}\w\sigma^2(t)\Delta v
+ \w b(t,X_t)\cdot\nabla v
\Bigr)\,dt 
+ \w\sigma(t)\nabla v\,dB_t.
$$
Defining $Z_t=\w\sigma(t)\nabla v(t,X_t)$, we get
$$
\mathrm{d}Y_t = -f(t, X_t, Z_t) \mathrm{d}t + Z_t \mathrm{d}B_t,
$$
where 
$$
f(t,x,z):=\inf_{u\in\mathbb{R}^d}\bigl\{\ell(t,x,u)+u\cdot\tfrac{z}{\w\sigma(t)}\bigr\}.
$$
This establishes the desired BSDE representation for the value function $v(t,x)$.

The optimal control $u^*$ is given as:
$$
u^*(t) = \mathrm{argmin}_{u\in\mathbb{R}^d} \left\{ \ell(t,X_t,u) + u \cdot \nabla v(t, X_t) \right\} = \mathrm{argmin}_{u\in\mathbb{R}^d} \left\{ \ell(t,X_t,u) + u \cdot \frac{Z_t}{\w{\sigma}(t)} \right\}.
$$
\end{proof}

The issue can be reformulated to be a FBSDE numerical solution problem. Some examples are presented below.

\subsection{Two Examples}
\subsubsection{Linear-Quadratic-Interaction Cost}

Particularly, we are interested in some more general interactions like:
\begin{equation}
    \ell(t,x,u) = \frac{\alpha}{2 \w{\sigma}^2(t)}\|u\|^2 + \theta_1(t,x) \cdot u +\theta_2(t,x) ,
\end{equation}
where we let $\alpha > 0$ be a hyperparameter, and $\theta_1, \theta_2 : \mathbb{R}^d \to \mathbb{R}^d$ be a state-dependent vector field.

In the special case the generator $f$ of BSDE becomes:
\begin{equation}
    f(t,x,y,z) = -\frac{1}{2\alpha} \left\| z+ \theta_1(t,x)\cdot \w{\sigma}(t) +\theta_2(t,x) \right\|^2,
\end{equation}
and the BSDE simplifies to:
\begin{equation}
    dY_t =\frac{1}{2\alpha} \left\| Z_t + \theta_1(t,x)\cdot \w{\sigma}(t) +\theta_2(t,x)\right\|^2 dt + Z_t dB_t, \quad Y_T = r(X_T).
\end{equation}

The optimal control $u^*$ is:
\begin{equation}
u^*(t) = - \frac{\w{\sigma}^2(t)}{\alpha} \theta_1(t,X_t) - \frac{\w{\sigma}(t)}{\alpha} Z_t.   
\end{equation}
\subsubsection{Product-Quadratic-Interaction Cost}

We are particularly interested in some more general interactions such as:
\begin{equation}
    \ell(t,x,u) = \frac{\alpha}{2 \w{\sigma}^2(t)}\|u\|^2 \cdot\theta(x),
\end{equation}
where $\alpha> 0$  be a hyperparameter, and $\theta : \mathbb{R}^d \to \mathbb{R}^d$ be a state-dependent vector field.

In the special case the generator $f$ of BSDE  becomes:
\begin{equation}
    f(t,x,y,z) =-\frac{z^2}{2\alpha\theta(x)},
\end{equation}
and the BSDE simplifies to:
\begin{equation}
    dY_t =\frac{Z_t^2}{2\alpha\theta(X_t)} dt + Z_t dB_t, \quad Y_T = r(X_T).
\end{equation}

The optimal control $u^*$ is:
\begin{equation}
u^*(t) = - \frac{\w{\sigma}(t)}{\alpha \theta(X_t)} Z_t.
\end{equation}



\section{Numerical Method}
The following is a brief discussion of the assumptions required for the well-posedness of the FBSDE solution. Forward SDE has a unique strong solution, provided that the drift $\w{b}(t,x)$ is Lipschitz in $x$ and the diffusion $\w{\sigma}(t)$ is bounded and uniformly non-degenerate. The backward part is associated with a quadratic growth generator in $Z$, to be precise, $f(Z) = \frac{Z^2}{2\alpha}$, and is provided for by classically-natured results such as in  Kobylanski \cite{kobylanski2000backward}, assuming that the terminal condition $r(X_T)$ is bounded or has exponential integrability. The latter is a mild and generally encountered condition in control and stochastic modeling problems.

This problem is reformulated as a numerical solution issue for a certain category of FBSDE. Common techniques involve the Euler-Maruyama discretization schemes and deep BSDE solvers such as the  E, Han, and Jentzen \cite{han2018solving} approach.

\subsection{BSDE-FT algorithm}

\begin{algorithm}[H] 
\caption{BSDE-based Fine-Tuning (BSDE-FT)}
\label{alg:bsde-ft}
\begin{algorithmic}[1]
    \STATE \textbf{Input:} Reward function $r(\cdot)$, pre-trained score function $\{s_t^{pre}\}_{t=0}^{T-1}$, running cost function $\ell$, dynamics coefficient $\w{\sigma}_t, \w{b}_t$, time step $\Delta t$,
    \STATE Set $Y_T(y)= r(y)$, $Z_T(y) = \nabla_y Y_T(y) \cdot \sigma_T$.
    
    \FOR{$t = T-1, \dots, 0$}
       
        \STATE Compute the optimal control $u^*_t(y) = \mathrm{argmin}_{u} \left\{ \ell(t,y,u) + \mathbb{E}\left[u \cdot \frac{Z_{t+1}}{\w{\sigma}(t+1)}\right] \right\}$
        
         \STATE Define the minimizing Hamiltonian $$f^*_t(y) = \ell(t,y,u^*_t) + u^*_t(y) \cdot \mathbb{E} \left[ \frac{Z_{t+1}}{\w{\sigma}(t+1)} \right].$$
        
        \STATE Compute $X_{t+1}^* = y + \big[\w{b}(t, y) + u^*_t(y)\big]\Delta t + \sigma_t \Delta W_{t+1}$.
    
        \STATE Update
        $$Y_t(y) =  \mathbb{E} \left[ Y_{t+1}(X_{t+1}^*) \right] + f^*_t(y) \cdot \Delta t$$
        
        \STATE Compute gradient 
        $$Z_t(y) = \frac{1}{\Delta t}\mathbb{E} \big[ Y_{t+1}(X_{t+1}^*) \cdot \Delta W_{t+1} \big].$$

    \ENDFOR
    
    \STATE \textbf{return} Optimal controls $\{u^*_t\}_{t=0}^{T-1}$ and BSDE solution $\{Y_t, Z_t\}_{t=0}^T$.
\end{algorithmic}
\end{algorithm}

\subsection{Euler--Maruyama Discretization}

In order to solve the FBSDE system numerically, we use a time-discretized Euler--Maruyama scheme. Let there be a time grid $\{t_0 = 0 < t_1 < \dots < t_N = T\}$ with uniform step size $\Delta t = T/N$. The forward SDE is discretized as:
\begin{equation}
X_{n+1} = X_n + \w{b}(t_n, X_n, u_n) \Delta t + \w{\sigma}(t_n) \Delta W_n,
\end{equation}
where the i.i.d. Gaussian increments $\Delta W_n \sim \mathcal{N}(0, \Delta t)$.

The backward SDE is approximated in the following backward-recursive manner:
\begin{align}
Y_n &= \mathbb{E}_n \left[ Y_{n+1} + f(t_n, X_n, Y_{n+1}, Z_n) \Delta t \right], \\
Z_n &= \mathbb{E}_n \left[ \frac{1}{\Delta t} Y_{n+1} \Delta W_n^\top \right],
\end{align}
where $\mathbb{E}_n[\cdot]$ is the conditional expectation with respect to $\mathcal{F}_{t_n}$.

In practice, the conditional expectations are approximated by Monte Carlo sampling or by neural network regressors.

\subsection{Deep Learning-based BSDE Solver}

In the approach of E, Han, and Jentzen, we express the solution to the backward SDE via neural networks. Precisely, we approximate the solution pair $(Y_t,Z_t)$ by:
\begin{align}
Y_0 &\approx \hat{Y}_0 := \mathcal{Y}_\theta(X_0), \\
Z_n &\approx \hat{Z}_n := \mathcal{Z}_\phi(t_n, X_n),
\end{align}
where $\mathcal{Y}_\theta$ and $\mathcal{Z}_\phi$ are neural networks parameterized by $\theta$ and $\phi$, respectively.

The forward process is modeled by applying the Euler scheme and the terminal loss is formulated as:
\begin{equation}
\mathcal{L}(\theta, \phi) := \mathbb{E} \left[ \left| \hat{Y}_N - r(X_N) \right|^2 \right],
\end{equation}
where $\hat{Y}_N$ is found recursively by:
\begin{equation}
\hat{Y}_{n+1} = \hat{Y}_n - f(t_n, X_n, \hat{Y}_n, \hat{Z}_n) \Delta t + \hat{Z}_n^\top \Delta W_n.
\end{equation}

The parameters $(\theta, \phi)$ are determined to minimize the terminal loss and obtain an approximate solution to the system of BSDEs.

\section{Ambiguity Formulation via Probabilistic Uncertainty}

In real-world scenarios, the true score function $\nabla \log p(t,x)$ is unknown and must be estimated from data. To capture the resulting epistemic uncertainty, we adopt the multiple prior framework, which interprets uncertainty not as additional noise but as ambiguity about the true underlying probability law.

Rather than working with a fixed reference probability measure $Q$, we consider a family of equivalent measures $\mathcal{P}$, each defined via Girsanov’s theorem using a density generator process $\eta_t$. Under $Q^\eta \in \mathcal{P}$, the dynamics evolve as:
\begin{equation}
       dX_t = \w b(t,X_t) + u_t\,dt + \w\sigma(t)\,dB_t^{Q^\eta}, \quad \text{where } dB_t^{Q^\eta} = dB_t - \eta_t\,dt, 
\end{equation}
and the agent evaluates cost with respect to the worst-case measure in $\mathcal{P}$.

The original cost is transformed from the expectation under a fixed measure to the maximum expectation under a family of $\mathcal{P}$:
\begin{equation}
    v(t,x) =\inf_{u\in \mathbb{R}^d} \sup_{Q^\eta \in \mathcal{P}}\mathbb{E}^{Q^\eta}\left[ \int_t^T \ell(s, X_s, u_s)ds + r(X_T) \right].
\end{equation}

This ambiguity perspective leads to a recursive utility representation under multiple priors. Letting $(Y_t, Z_t)$ denote the value process and its sensitivity, the problem is described by the following FBSDE:
\begin{equation}
\begin{cases}
    dX_t =\w b(t,X_t)dt + \w\sigma(t) dB_t, \\
    dY_t = - \left[ \inf_{u \in \mathbb{R}^d} \left\{ \ell(t,X_t,u_t) + u \cdot \frac{Z_t}{\w\sigma(t)} \right\} + \sup_{\eta_t \in \mathcal{H}(t,x)} \eta_t^\top Z_t \right] dt + Z_t dB_t, \quad Y_T = r(X_T),
\end{cases}
\end{equation}
where the generator incorporates the worst-case impact of model ambiguity. The supremum term represents the support function of the uncertainty set $\mathcal{H}(t,x)$ in the direction of $Z_t$, and leads to the optimal distortion:
\begin{equation}
    \eta_t^*(x,z) = \arg\max_{\eta \in \mathcal{H}(t,x)} \eta^\top z.
\end{equation}
In particular, we define the uncertainty set with a maximum worst-case tolerance $\delta$ as:
\begin{equation}
    \mathcal{H}(t,x) := \left\{ \eta \in \mathbb{R}^d \,\middle|\, \|\eta\| \leq \delta \right\},
\end{equation}
In actual operation, the radius $\delta$ could be fixed and selected arbitrarily or adaptively calculated from the empirical error or the ensemble variance of the acquired score model. This gives rise to a robust control formulation that the supremum term in the BSDE generator simplifies to:
\begin{equation}
    \sup_{\eta \in \mathcal{H}(t,x)} \eta^\top Z_t = \delta\, \|Z_t\|,
\end{equation}

The robust value process $(Y_t, Z_t)$ satisfying the BSDE is given by the following theorem.

\begin{theorem}[Existence and Uniqueness of Robust Value Process]
\label{thm:robust_value_existence}
Assume that $\ell(t,x,u)$ satisfies assumptions (H1)–(H7). Let $\delta > 0$ and consider the following robust Backward Stochastic Differential Equation (BSDE) :
\begin{equation}
\label{eq:robust_BSDE}
    dY_t = -\left[\inf_{u \in \mathbb{R}^d} \left\{ \ell(t,X_t,u_t) + u \cdot \frac{Z_t}{\w\sigma(t)} \right\} + \delta \|Z_t\| \right] dt + Z_t dB_t, \quad Y_T = r(X_T).
\end{equation}
where $X_t$ follows 
\begin{equation}
dX_t =\w b(t,X_t)dt + \w\sigma(t) dB_t,\quad X_0=x.    
\end{equation}

Then, there exists a unique adapted solution pair $(Y_t, Z_t)$ and a constant $C > 0$ such that
\begin{equation}
\mathbb{E}\left[ \sup_{0 \le t \le T} |Y_t|^2 + \int_0^T \|Z_t\|^2 dt \right] \le C \left( 1 + \|r\|_{L^2}^2 \right).
\end{equation}
\begin{equation}\label{eq:est}
\mathbb{E}\left[\sup_{0\le t\le T}|Y_t|^2 + \int_0^T |Z_t|^2 dt\right] \le C\bigl(1+\mathbb{E}[|r(X_T)|^2]\bigr).
\end{equation}

\end{theorem}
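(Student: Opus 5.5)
The plan is to treat the robust BSDE as a perturbation of the BSDE in Theorem~\ref{thm:fbsde_value_existence}. The generator of the robust equation is
\[
F(t,x,z) := f(t,x,z) + \delta\|z\|, \qquad f(t,x,z)=\inf_{u\in\mathbb{R}^d}\Bigl\{\ell(t,x,u)+u\cdot\tfrac{z}{\w\sigma(t)}\Bigr\} = -h\bigl(t,x,\w\sigma(t)^{-1}z\bigr),
\]
with $h$ the Fenchel--Legendre transform of $\ell$. The extra term $\delta\|z\|$ is convex, globally Lipschitz in $z$ with constant $\delta$, and vanishes at $z=0$. Every structural property used in the proof of Theorem~\ref{thm:fbsde_value_existence} should therefore survive with modified constants. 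Concretely, I will check the three inputs of [Briand and Hu, 2006] for $F$ in place of $f$.

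\textbf{Quadratic growth of $F$.} From Step~1 of the proof of Theorem~\ref{thm:fbsde_value_existence}, (H2) and Young's inequality give $|h(t,x,v)|\le C(1+\|v\|^q)$ with $1<q\le 2$. Uniform ellipticity (H7) bounds $\w\sigma^{-1}$. Since $\delta\|z\|\le \tfrac{\delta}{2}(1+\|z\|^2)$, this yields
\[
|F(t,x,z)|\le K(1+\|z\|^2)
\]
for a new constant $K$ depending on $\delta$. Continuity of $F$ in $(x,z)$ and local Lipschitz continuity in $z$ are inherited from $f$, since $\|\cdot\|$ is Lipschitz.

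\textbf{Exponential integrability of the terminal value.} The forward process $X$ is the same uncontrolled SDE as before. Step~2 of the earlier proof therefore applies verbatim: the sub-Gaussian bound on $\sup_t\|X_t\|$ together with the Lipschitz bound on $r$ from (H4) give $\mathbb{E}[\exp(\gamma|r(X_T)|)]<\infty$ for all $\gamma>0$. Briand--Hu's Theorem~2 then gives existence of $(Y,Z)$ with $e^{|Y|}$ in every $\mathcal S^p$.

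\textbf{Uniqueness.} This is the step I expect to be the main obstacle. Briand--Hu's uniqueness and comparison result requires the generator to be convex (or concave) in $z$. Here $f=-h(\cdot,\w\sigma^{-1}z)$ is concave in $z$, being an infimum of affine functions of $z$, whereas $\delta\|z\|$ is convex, so $F$ is neither convex nor concave in general. I would try two routes, in this order.
\begin{enumerate}
\item Use the representation $\delta\|z\| = \sup_{\|\eta\|\le\delta}\eta^\top z$. This writes $F(t,x,z)=\inf_u\sup_\eta\{\ell+u\cdot\w\sigma^{-1}z+\eta^\top z\}$ as an inf-sup of affine-in-$z$ drivers. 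One then runs the $\theta$-technique of Briand--Hu by comparing $Y^1-\theta Y^2$. The concave part is handled by concavity. The Lipschitz part $\delta\|z\|$ only produces a linear term, which can be absorbed by a Girsanov change of measure with drift bounded by $\delta$; such a change preserves the exponential moments.
\item Alternatively, use the regularity of $\ell$ from (H1), (H5) and (H6) and the Markovian structure to show that $Z$ is bounded, as in Theorem~\ref{theo:Regularity}, via $Z_t=\w\sigma\nabla_x v(t,X_t)$. On bounded $z$ the generator $F$ is globally Lipschitz, so Pardoux--Peng uniqueness applies directly.
\end{enumerate}

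\textbf{$L^2$ estimate.} Finally, apply the exponential transform $P_t=e^{\gamma Y_t}$ with $\gamma>2K$, as in Step~4 of the earlier proof (Briand--Hu, Corollary~4). This bounds $\mathbb{E}\sup_t|Y_t|^2$ in terms of exponential moments of $|r(X_T)|$ and the constant $K$. The $Z$-part then follows from It\^o's formula applied to $e^{\gamma|Y|}$, or to $|Y|^2$ with a localization argument, which gives
\[
\mathbb{E}\int_0^T\|Z_t\|^2\,dt\le C\bigl(1+\mathbb{E}|r(X_T)|^2\bigr),
\]
with $C$ now also depending on $\delta$. The two displayed estimates in the theorem are the same bound written in two forms.
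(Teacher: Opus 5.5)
For existence and the a priori estimate you follow the paper. Its entire proof is the remark that $F=f+\delta\|z\|$ keeps quadratic growth, after which it says Theorem~\ref{thm:fbsde_value_existence} applies verbatim, uniqueness included. You are right not to accept this for uniqueness. The uniqueness step there rests on the convex-analytic (concave-in-$z$) structure of the generator, which $f+\delta\|z\|$ lacks, so your route~1 supplies an argument the paper omits.

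State explicitly why the extra term is harmless, because the reason is special to $\delta\|z\|$. With $V:=(Z^1-\theta Z^2)/(1-\theta)$, positive homogeneity gives
\[
\delta\|Z^1\|-\theta\,\delta\|Z^2\|=\delta\bigl(\|Z^1\|-\|\theta Z^2\|\bigr)\ge-\delta\|Z^1-\theta Z^2\|=-(1-\theta)\,\delta\|V\|,
\]
and concavity gives $f(Z^1)-\theta f(Z^2)\ge(1-\theta)f(V)$. From these:
\begin{enumerate}
\item the driver of $(Y^1-\theta Y^2)/(1-\theta)$ is bounded below by $-K'(1+\|V\|^2)$;
\item its terminal value is still $r(X_T)$;
\item the exponential transform then gives $Y^1-\theta Y^2\ge-(1-\theta)C_t$ with $C_t$ independent of $\theta$.
\end{enumerate}
Letting $\theta\uparrow1$ and exchanging the roles of the two solutions concludes; no Girsanov change is even needed. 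For a general Lipschitz perturbation $g$ this step fails. The remainder $(g(\theta Z^2)-\theta g(Z^2))/(1-\theta)$ is then only $O(1+\|Z^2\|)$, and it cannot be absorbed without exponential moments of $\int_0^T\|Z^2_s\|\,ds$. So homogeneity is the real point, not merely that the term is Lipschitz. The argument yields uniqueness in the class where $\sup_t|Y_t|$ has exponential moments of all orders, and the theorem should be stated in that class.

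Your route~2 is weaker than route~1. Bounded $Z$ for the Markovian solution gives uniqueness only among solutions with bounded $Z$. It would also need a gradient bound for the robust PDE, which Theorem~\ref{theo:Regularity} does not give, since it concerns the non-robust HJB.

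Two steps you inherit from the paper need repair.
\begin{enumerate}
\item (H4) bounds only $\nabla^2 r$, so $r$ may grow quadratically. Then $\mathbb{E}e^{\gamma|r(X_T)|}$ is finite only for small $\gamma$, whereas Briand--Hu's existence needs exponential moments of order $2K$ and your uniqueness argument needs all orders. You must assume $r$ Lipschitz, as the paper tacitly does in its Step~2.
\item (H2) only asserts $p\ge1$, so it does not give $q\le2$. Take the quadratic bound from (H1) and (H3) instead: $\ell\ge0=\ell(t,x,0)$ together with strong convexity gives $\ell(t,x,u)\ge\tfrac{\lambda_\ell}{2}\|u\|^2$. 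Hence $-\|z\|^2/(2\lambda_\ell c_0)\le f(t,x,z)\le0$, which is all the growth and concavity input you need.
\end{enumerate}
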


\begin{proof}
The proof proceeds analogously to Theorem \ref{thm:fbsde_value_existence}. We observe that the generator in the robust formulation differs only by the term $\delta \|Z_t\|$. Since this term exhibits linear growth in $Z_t$, the combined generator maintains the quadratic growth condition with respect to $Z_t$ (i.e., $f^\delta(t,x,z) = f(t,x,z) + \delta \|z\| \le K'(1+\|z\|^2)$). Consequently, the integrability of the terminal value and the assumptions invoked in Theorem \ref{thm:fbsde_value_existence} remain sufficient to guarantee the existence and uniqueness of the solution.
\end{proof}

\begin{theorem}[Robust Dynamic Programming Principle under Ambiguity]
\label{thm:robust_dpp}
Let the value function $v(t,x)$ be defined as
\begin{equation}
v(t,x) = \inf_{u} \sup_{\eta \in \mathcal{H}(t,x)} \mathbb{E}\left[ \int_t^T \ell(s, X_s, u_s) \, ds + r(X_T) \right].
\end{equation}

Then, $v(t,x)$ satisfies the following robust Hamilton-Jacobi-Bellman (HJB) equation:
\begin{equation}
\frac{\partial v}{\partial t} + \frac{1}{2}\w\sigma^2(t) \Delta v + \w b(t,x) \cdot \nabla v + \inf_{u} \left\{ \ell(t,x,u) + u \cdot \nabla v \right\} + \delta \|\w\sigma(t) \nabla v\| = 0,
\end{equation}
with the terminal condition $v(T,x) = r(x)$.

Furthermore, the optimal control $u^*$ and the optimal (worst-case) perturbation $\eta^*$ are given by:

\begin{align*}
u^*(t,x) &= \arg\min_{u} \left\{ \ell(t,x,u) + u \cdot \nabla v(t,x) \right\}, \\
\eta^*(t,x) &= \delta \frac{\nabla v(t,x)}{\|\nabla v(t,x)\|}.
\end{align*}

\end{theorem}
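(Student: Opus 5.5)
The plan is to mimic the derivation in Proposition~\ref{pro-hjb-ell}, now treating the game between the control $u$ and the adversarial drift distortion $\eta$. Throughout, the dynamics under $Q^\eta$ are taken to be $dX_s=(\w b(s,X_s)+u_s+\w\sigma(s)\eta_s)\,ds+\w\sigma(s)\,dB_s^{Q^\eta}$. Here $\eta$ ranges over progressively measurable processes with $\eta_s\in\mathcal H(s,X_s)$, so that $\|\eta_s\|\le\delta$. The first step is a robust dynamic programming principle: for $t+h\le T$,
\[
v(t,x)=\inf_{u}\sup_{\eta}\mathbb{E}^{Q^\eta}\Big[\int_t^{t+h}\ell(s,X_s,u_s)\,ds+v(t+h,X_{t+h})\Big].
\]
I would obtain it in the standard way for stochastic differential games: condition on $\mathcal F_{t+h}$ and use the flow property of the SDE. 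The key point is that the constraint set $\mathcal H$ is pointwise in $(s,x)$, hence stable under concatenation (pasting) of $\eta$'s. Each $Q^\eta$ is equivalent to the reference measure, since $\eta$ is bounded and Novikov's condition holds trivially.

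Second, assuming $v\in C^{1,2}$ with bounded gradient (as in Theorem~\ref{theo:Regularity}), I would apply It\^o's formula to $v(s,X_s)$ under $Q^\eta$. The stochastic integral is a true martingale because $\nabla v$ is bounded and $\w\sigma$ is bounded. Dividing by $h$ and letting $h\downarrow0$, using continuity of the coefficients and constant controls on $[t,t+h]$, gives the infinitesimal equation
\[
\partial_t v+\tfrac12\w\sigma^2\Delta v+\w b\cdot\nabla v+\inf_u\sup_{\|\eta\|\le\delta}\big\{\ell(t,x,u)+u\cdot\nabla v+\eta\cdot\w\sigma\nabla v\big\}=0 .
\]
For the limit I would use the usual two inequalities. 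Fixing $u$ and an arbitrary $\eta$ yields one direction. Using $\varepsilon$-optimal feedbacks yields the other, exactly as in the verification argument behind Proposition~\ref{pro-hjb-ell}.

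Third, the Hamiltonian separates: $u$ appears only in $\ell+u\cdot\nabla v$, and $\eta$ appears only in the linear term $\eta\cdot\w\sigma\nabla v$. Hence $\inf_u\sup_\eta=\inf_u\{\ell+u\cdot\nabla v\}+\sup_{\|\eta\|\le\delta}\eta^\top\w\sigma\nabla v$, with no minimax theorem required, and the value of the game exists. By Cauchy--Schwarz the supremum equals $\delta\|\w\sigma\nabla v\|$ and is attained at $\eta^*=\delta\,\w\sigma\nabla v/\|\w\sigma\nabla v\|$. Because $\w\sigma(t)$ is a positive scalar, this is $\delta\nabla v/\|\nabla v\|$ wherever $\nabla v\ne0$; any $\eta$ in the ball is optimal where $\nabla v=0$. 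The statement of $u^*$ then follows from (H1): strong convexity gives a unique minimizer of $\ell(t,x,\cdot)+u\cdot\nabla v$. The terminal condition $v(T,x)=r(x)$ is immediate from the definition.

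Finally, a verification step: given the classical solution, plugging the feedback pair $(u^*,\eta^*)$ into It\^o's formula gives equality, while any other $u$ (respectively $\eta$) gives $\ge$ (respectively $\le$). This also identifies $v$ with the $Y$-component of the robust BSDE of Theorem~\ref{thm:robust_value_existence}, via $Z_t=\w\sigma\nabla v(t,X_t)$. I expect the main obstacle to be the first step, namely the rigorous robust DPP. One must handle measurable selection of $\varepsilon$-optimal $\eta$'s and keep the information structure consistent, since as written $\sup_\eta$ sits inside $\inf_u$ with both open-loop. A related difficulty is that the discontinuous feedback $\eta^*$ near $\{\nabla v=0\}$ may not yield a strong solution. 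I would try first to sidestep both issues by proving the result only at the level of verification. That means establishing the two inequalities for arbitrary admissible pairs and using the Lipschitz $u^*$ from Theorem~\ref{theo:Regularity}. For $\eta$, I would use approximate optimizers $\delta\,\w\sigma\nabla v/(\|\w\sigma\nabla v\|\vee\varepsilon)$, which are Lipschitz and approach the supremum as $\varepsilon\to0$.
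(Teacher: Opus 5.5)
Your proposal is correct, and in the verification-only form you ultimately settle on it is essentially the paper's proof. Both use It\^o's formula for $v(s,X_s)$ under $Q^\eta$, split the Hamiltonian into $\inf_u\{\ell+u\cdot\nabla v\}$ plus the support function $\delta\|\w\sigma\nabla v\|$ attained at $\eta^*$, and close with the saddle-point inequalities $J(u^*,\eta)\le v\le J(u,\eta^*)$. You state those inequalities with the correct directions: the paper's display for arbitrary $u$ should read $\mathcal{L}^{u}v+\w\sigma\eta^*\cdot\nabla v\ge-\ell(t,x,u)$, and its unproved ``reverse inequality'' is exactly your bound for $u^*$ against arbitrary $\eta$.

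One correction concerns the Lipschitz feedback, which you cannot import from Theorem~\ref{theo:Regularity}, since that theorem concerns the non-robust value function. Instead, assume a bounded Hessian for the robust $v$ and rerun the implicit-function argument from that proof. For $\eta$ no smoothing is needed: in the measure-change formulation the state path does not change, and $\eta^*(s,X_s)$, set to $0$ where $\nabla v=0$, is simply a bounded adapted Girsanov kernel.
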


\begin{proof}
The proof relies on a verification argument utilizing Itô's formula and Girsanov's theorem, adapted to the robust control framework. We demonstrate that the classical solution $v(t,x)$ to the robust HJB equation corresponds to the value function defined by the minimax objective.

\noindent\textbf{Step 1. Itô's Expansion under Model Ambiguity}
Let $v \in C^{1,2}([0,T]\times\mathbb{R}^d)$ be a solution to the robust HJB equation \eqref{eq:hjb-ell}. Fix an arbitrary admissible control $u = \{u_s\}_{s \in [t,T]}$ and a drift distortion process $\eta = \{\eta_s\}_{s \in [t,T]} \in \mathcal{H}$, where $\mathcal{H} = \{ \eta : \|\eta\| \le \delta \}$.
Under the ambiguous probability measure $Q^\eta$, the state dynamics transform to:
\begin{equation}
    dX_s = \left(\w{b}(s, X_s) + u_s + \w{\sigma}(s)\eta_s\right) ds + \w{\sigma}(s) dW_s^{Q^\eta},
\end{equation}
where $W^{Q^\eta}_s = W_s - \int_0^s \eta_r dr$ is a Brownian motion under $Q^\eta$. Applying Itô's formula to $v(s, X_s)$ on $[t, T]$:
\begin{align*}
    v(T, X_T) = v(t, x) &+ \int_t^T \left( \frac{\partial v}{\partial t} + \frac{\w{\sigma}^2(s)}{2}\Delta v + (\w{b}(s,X_s) + u_s)\cdot \nabla v \right) ds \\
    &+ \int_t^T \w{\sigma}(s) \eta_s \cdot \nabla v(s, X_s) \, ds + \int_t^T \w{\sigma}(s) \nabla v(s, X_s) \cdot dW_s^{Q^\eta}.
\end{align*}
Taking the conditional expectation $\mathbb{E}^{Q^\eta}_t[\cdot]$ and rearranging terms yields:
\begin{equation} \label{eq:ito_rep}
    v(t,x) = \mathbb{E}^{Q^\eta}_t \left[ r(X_T) - \int_t^T \left( \mathcal{L}^{u_s} v(s, X_s) + \w{\sigma}(s) \eta_s \cdot \nabla v(s, X_s) \right) \, ds \right],
\end{equation}
where $\mathcal{L}^{u} v := \frac{\partial v}{\partial t} + \frac{\w{\sigma}^2}{2}\Delta v + (\w{b} + u)\cdot \nabla v$.

\noindent\textbf{Step 2. Analysis of the Robust Hamiltonian}
The function $v$ satisfies the robust HJB equation:
\begin{equation} \label{eq:robust_hjb_proof}
    \frac{\partial v}{\partial t} + \frac{\w{\sigma}^2}{2} \Delta v + \w b \cdot \nabla v + \inf_{u} \left\{ \ell(u) + u \cdot \nabla v \right\} + \sup_{\eta \in \mathcal{H}} \{ \w{\sigma} \eta \cdot \nabla v \} = 0.
\end{equation}
We analyze the supremum term associated with the ambiguity. Since $\w{\sigma}(t) > 0$ is a scalar, the term becomes:
$$ \sup_{\|\eta\| \le \delta} \eta \cdot (\w{\sigma}(t) \nabla v) = \delta \|\w{\sigma}(t) \nabla v\|. $$
The supremum is attained when $\eta$ aligns with the direction of $\w{\sigma}(t) \nabla v$. Since $\w{\sigma}(t)$ is positive, this is equivalent to the direction of $\nabla v$. Thus, the optimal worst-case perturbation is explicitly given by:
\begin{equation} \label{eq:opt_eta}
    \eta^*(t,x) = \delta \frac{\nabla v(t,x)}{\|\nabla v(t,x)\|}.
\end{equation}
Similarly, let $u^*(t,x)$ be the minimizer of the Hamiltonian $\ell(t,x,u) + u \cdot \nabla v$. Substituting the optimal feedbacks $u^*$ and $\eta^*$ into the HJB equation yields:
$$ \mathcal{L}^{u^*} v + \w{\sigma} \eta^* \cdot \nabla v = -\ell(t, x, u^*). $$

\noindent\textbf{Step 3. Verification of the Value Function}
Substitute the specific choice of $u^*$ and $\eta^*$ back into the Itô representation \eqref{eq:ito_rep}. The integrand becomes precisely $-\ell(s, X_s, u^*_s)$. Thus:
\begin{equation}
    v(t,x) = \mathbb{E}^{Q^{\eta^*}}_{t} \left[ \int_t^T \ell(s, X_s, u^*_s) \, ds + r(X_T) \right].
\end{equation}
To prove this is the saddle point value, consider any other admissible $u$. The HJB equation implies that for arbitrary $u$, the worst-case $\eta^*$ still satisfies:
$$ \mathcal{L}^{u} v + \w{\sigma} \eta^* \cdot \nabla v \le -\ell(t, x, u). $$
Taking expectations under $Q^{\eta^*}$:
$$ v(t,x) \le \mathbb{E}^{Q^{\eta^*}}_{t} \left[ \int_t^T \ell(s, X_s, u_s) \, ds + r(X_T) \right] \le \sup_{\eta \in \mathcal{H}} \mathbb{E}^{Q^{\eta}}_{t} [J(u, \eta)]. $$
Taking the infimum over $u$, we obtain $v(t,x) \le \inf_u \sup_\eta J(u, \eta)$. The reverse inequality holds by the optimality of $u^*$.
Therefore, $v(t,x)$ is indeed the value function of the robust fine-tuning problem, and $(u^*, \eta^*)$ form the optimal pair.
\end{proof}

The BSDEs in Chapters 4.3 and 4.4 can use the numerical solution of BSDE mentioned in Chapter 3.2 to obtain $Z_t$. Then the optimal control $u^*$ under various new situations is obtained.

\bibliographystyle{unsrt}
\bibliography{refs}

\end{document}